\setlist[enumerate]{nosep}
\definecolor{labelkey}{rgb}{0,0.08,0.45}
\definecolor{refkey}{rgb}{0,0.6,0.0}
\definecolor{Brown}{rgb}{0.45,0.0,0.05}
\definecolor{lime}{rgb}{0.00,0.8,0.0}
\definecolor{lblue}{rgb}{0.5,0.5,0.99}
\colorlet{hlcyan}{cyan!30}
\def\namedlabel#1#2{\begingroup
   \def\@currentlabel{#2}%
   \label{#1}\endgroup
}
\newcommand{\seppfour}{\setlength{\itemsep}{-4pt}}
\newcommand{\tsqrt}[1]{\ensuremath{\textstyle{\sqrt{#1}}}}
\newcommand{\nnn}{\ensuremath{{n\in{\mathbb N}}}}
\newcommand{\menge}[2]{\big\{{#1}~\big |~{#2}\big\}}
\newcommand{\Menge}[2]{\Big\{{#1}~\Big|~{#2}\Big\}}
\newcommand{\fenv}[1]%
{\ensuremath{\,\overrightarrow{\operatorname{env}}_{#1}}}
\newcommand{\benv}[1]%
{\ensuremath{\,\overleftarrow{\operatorname{env}}_{#1}}}
\newcommand{\scal}[2]{\left\langle{#1},{#2}  \right\rangle}
\newcommand{\tscal}[2]{\langle{#1},{#2}\rangle}
\newcommand{\bscal}[2]{\big\langle{#1},{#2}\big\rangle}
\newcommand{\RR}{\ensuremath{\mathbb R}}
\newcommand{\RP}{\ensuremath{\left[0,+\infty\right[}}
\newcommand{\RPP}{\ensuremath{\left]0,+\infty\right[}}
\newcommand{\RM}{\ensuremath{\mathbb{R}_-}}
\newcommand{\NN}{\ensuremath{\mathbb N}}
\newcommand{\dom}{\ensuremath{\operatorname{dom}}}
\newcommand{\conv}{\ensuremath{\operatorname{conv}\,}}
\newcommand{\cspan}{\ensuremath{\overline{\operatorname{span}}\,}}
\newcommand{\Id}{\ensuremath{\operatorname{Id}}}
\newcommand{\minf}{\ensuremath{-\infty}}
\newcommand{\pinf}{\ensuremath{+\infty}}
\crefname{lemma}{Lemma}{Lemmas}
\crefname{equation}{}{equations}
\crefname{figure}{Figure}{Figures}
\crefname{chapter}{Appendix}{chapters}
\crefname{item}{}{items}
\crefname{enumi}{}{}
\newtheorem{theorem}{Theorem}[section]
\newtheorem{corollary}[theorem]{Corollary}
\newtheorem{proposition}[theorem]{Proposition}
\newtheorem{example}[theorem]{Example}
\newtheorem{fact}[theorem]{Fact}
\newtheorem{algo}[theorem]{Algorithm}
\newtheorem{remark}[theorem]{Remark}
\providecommand{\RR}{\mathbb{R}}
\providecommand{\conv}{\operatorname{conv}}
\providecommand{\cone}{\operatorname{cone}}
\providecommand{\clcone}{\overline{\operatorname{cone}}\,}
\providecommand{\dom}{\operatorname{dom}}
\providecommand{\Id}{\operatorname{{ Id}}}
\providecommand{\NN}{\mathbb{N}}
\providecommand{\rec}{\operatorname{rec}}
\providecommand{\barc}{\operatorname{bar}}
\providecommand{\Id}{\operatorname{Id}}
\newcommand{\cran}{\ensuremath{\overline{\operatorname{ran}}\,}}
\providecommand{\RR}{\mathbb{R}}
\providecommand{\NN}{\mathbb{N}}
\definecolor{myblue}{rgb}{.8, .8, 1}
  \newcommand*\mybluebox[1]{%
    \colorbox{myblue}{\hspace{1em}#1\hspace{1em}}}
\begin{document}

\title{\textsc{
The homogenization cone:\\ polar cone and projection}}
\author{
Heinz H.\ Bauschke\thanks{
Mathematics, University
of British Columbia,
Kelowna, B.C.\ V1V~1V7, Canada. E-mail:
\texttt{heinz.bauschke@ubc.ca}.},~
Theo Bendit\thanks{
Mathematics, University
of British Columbia,
Kelowna, B.C.\ V1V~1V7, Canada. E-mail:
\texttt{theo.bendit@ubc.ca}.},~
~and 
Hansen Wang\thanks{
Mathematics, University
of British Columbia, 
Kelowna, B.C.\ V1V~1V7, Canada.
E-mail: \texttt{wanghansenwh@163.com}
}
}

\date{May 31, 2022}
\maketitle

\vskip 8mm

\begin{abstract} 
Let $C$ be a closed convex subset of a real Hilbert space
containing the origin, and assume that 
$K$ is the homogenization cone of $C$, i.e., 
the smallest closed convex cone containing $C\times\{1\}$.
Homogenization cones play an important role in optimization as they include,
for instance, the second-order/Lorentz/``ice cream'' cone. 

In this note, we discuss the polar cone of $K$ as well as an algorithm for finding
the projection onto $K$ provided that the projection onto $C$ is available.
Various examples illustrate our results.


\end{abstract}

{\small
\noindent
{\bfseries 2020 Mathematics Subject Classification:}
{Primary 90C25; Secondary 46A55, 52A07
}

\noindent {\bfseries Keywords:}
conification, 
convex cone,
convex set, 
Hilbert space, 
homogenization cone,
ice cream cone,
Lorentz cone,
polar cone,
projection, 
recession cone, 
second order cone

}

\section{Introduction}

Throughout, 
\begin{empheq}[box=\mybluebox]{equation}
\label{e:X}
\text{$X$ is a real Hilbert space}
\end{empheq}
with inner product $\scal{\cdot}{\cdot}$ and induced norm $\|\cdot\|$.
We also assume that 
\begin{empheq}[box=\mybluebox]{equation}
\label{e:C}
\text{$C$ is a closed convex subset of $X$ such that $0\in C$.}
\end{empheq}
Recall that 
\begin{equation}
\cone(C\times\{1\})
:= \bigcup_{\rho>0}\rho(C\times\{1\})
\end{equation}
and denote the closure of this set by $\clcone(C\times\{1\})$.
Note that this notation is not entirely standard, some authors
allow $\rho=0$ in the definition of the conical hull. 

This note concerns the cone 
\begin{empheq}[box=\mybluebox]{equation}
\label{e:K}
K := \clcone(C\times\{1\}),
\end{empheq}
which is called the \emph{homogenization} (also known as \emph{conification}) of $C$; 
see, e.g, \cite{Brinkhuis} and \cite{RoshchinaTuncel}.

The purpose of this note is two-fold.
Starting with the representation (see also \cite[Theorem~8.2]{Rock70})
\begin{equation}
\label{e:rawsome}
K = 
\cone\big(C\times \{1\}\big) \uplus \big(\rec(C)\times \{0\}\big),
\end{equation}
where ``$\uplus$'' indicates that the union is disjoint, we first 
obtain the following beautiful formula for the polar cone 
\begin{equation}
\label{e:rawsomepo}
K^\ominus = 
\cone\big(C^\odot\times \{-1\}\big) \uplus \big(\rec(C^\odot) \times \{0\}\big),
\end{equation}
where $C^\odot$ is the polar set of $C$.

Secondly, we will discuss the computation of $P_K$ provided that $P_C$ is available.
In some cases, explicit formulas are available; in other cases, we are 
able to present an algorithm that allows us to find $P_K$ iteratively.

The remainder of this paper is organized as follows.
In \cref{sec:aux}, we collect basic properties and examples that are used
later in the paper. 
Formulas for the polar cone of $K$ are derived in \cref{sec:polar}. 
Scaled sets are investigated in \cref{sec:scaled}. 
In \cref{sec:proj}, we study the computation of $P_K$ provided that 
$P_C$ is available. 
The final \cref{sec:examples} contains a study of examples. 

The notation employed throughout the paper is standard and follows 
\cite{BC2017} and \cite{Rock70}.

\section{Auxiliary results and examples}
\label{sec:aux}

\subsection{Recession cone}

We recall that our standing assumptions are \cref{e:X} and \cref{e:C}, 
and that  the \emph{recession cone} of $C$,
denoted by $\rec(C)$, is defined by
\begin{equation}
\label{e:rec}
\rec(C)=\menge{x\in X}{x+C \subseteq C}. 
\end{equation}

The following facts concerning recession cones, 
which are largely part of the folklore, are stated here 
for completeness and the reader's convenience.

\begin{fact} 
\label{f:rec}
Let $x\in X$. 
The following statements hold:
\begin{enumerate}
\item 
\label{f:rec0}
$\rec(C)$ is a closed convex cone and $0\in \rec(C)$.
\item 
\label{f:rec0.5}
$x\in \rec(C)$
$\Leftrightarrow$
there exist $(\alpha_n)_\nnn$ in $\left]0,1\right]$ and 
$(c_n)_\nnn$ in $C$ such that $\alpha_n\to 0$ and $\alpha_nc_n\to x$. 
\item 
\label{f:rec1}
$x\in \rec(C)$
$\Leftrightarrow$
$\RP x\subseteq C$.
\item 
\label{f:rec2}
$\rec(C) = \bigcap_{\rho>0}\rho C$. 
\item 
\label{f:rec3}
$\rec(C)=\{0\}$
$\Leftrightarrow$
$C$ is \emph{linearly bounded}, i.e., 
$(\forall d\in X)$
$C\cap \RP \cdot d$ is bounded. 
\item 
\label{f:rec4}
If $X$ is finite-dimensional, then: 
$C$ is bounded $\Leftrightarrow$ $\rec(C) = \{0\}$. 
\item 
\label{f:rec5}
If $Y$ is a real Hilbert space and $D$ is a nonempty convex subset 
of $Y$, then $\rec(C\times D) = \rec(C)\times\rec(D)$.
\end{enumerate}
\end{fact}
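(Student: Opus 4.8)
The plan is to treat the seven parts in a dependency order, extracting the sequential description in part~(ii) as the workhorse from which most of the others fall out cheaply.

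First I would dispatch part~(i). That $0\in\rec(C)$ is immediate from $0+C=C\subseteq C$. For convexity and the cone property I would work directly from the defining inclusion: given $x\in\rec(C)$, $c\in C$ and $\lambda\in[0,1]$, the identity $\lambda x+c=\lambda(x+c)+(1-\lambda)c$ exhibits $\lambda x+c$ as a convex combination of the two points $x+c\in C$ and $c\in C$, so $\lambda x\in\rec(C)$; iterating $x+C\subseteq C$ gives $nx\in\rec(C)$ for every $n\in\NN$, and combining these two observations yields $\rho x\in\rec(C)$ for all $\rho\ge 0$. Convexity of $\rec(C)$ follows from the same convex-combination identity applied to two recession directions. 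Finally, closedness of $\rec(C)$ is inherited from closedness of $C$: if $x_n\to x$ with $x_n\in\rec(C)$, then for each $c\in C$ we have $x_n+c\in C$ and $x_n+c\to x+c$, whence $x+c\in C$.

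Next I would prove part~(ii), the key tool. For the forward implication, iterating $x+C\subseteq C$ from $0\in C$ gives $nx\in C$, so $\alpha_n:=1/n$ and $c_n:=nx$ work. For the converse, fix $c\in C$ and consider the convex combinations $\alpha_n c_n+(1-\alpha_n)c\in C$; since $\alpha_n c_n\to x$ and $(1-\alpha_n)c\to c$, these converge to $x+c$, and closedness of $C$ forces $x+c\in C$. With~(ii) in hand, parts~(iii) and~(iv) are quick: the forward direction of~(iii) uses the cone property together with $0\in C$, the reverse direction is an instance of~(ii) with $c_n=nx$; and~(iv) is just the set-theoretic rewriting $\bigcap_{\rho>0}\rho C=\Menge{x}{\RP x\subseteq C}$ combined with~(iii).

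For parts~(v) and~(vi) I would again lean on~(ii). In~(v), if $\rec(C)=\{0\}$ and some ray $\RP d$ met $C$ in an unbounded set, then normalizing along that ray produces via~(ii) a nonzero recession direction, a contradiction; conversely a nonzero $x\in\rec(C)$ yields the unbounded ray $\RP x\subseteq C$ by~(iii). Part~(vi) is the one place the hypothesis genuinely enters: the implication ``$C$ bounded $\Rightarrow\rec(C)=\{0\}$'' holds in any dimension (via~(v)), but for the converse I would take an unbounded sequence $c_n\in C$, normalize to $u_n=c_n/\|c_n\|$, and extract a convergent subsequence $u_{n_k}\to u$ with $\|u\|=1$; then $\alpha_k:=1/\|c_{n_k}\|\to 0$ and $\alpha_k c_{n_k}=u_{n_k}\to u$ give $u\in\rec(C)\setminus\{0\}$ by~(ii). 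This extraction is exactly where compactness of the unit sphere, i.e.\ finite-dimensionality, is indispensable, and it is the only real obstacle in the whole statement. Finally, part~(vii) is a direct unpacking: $(x,y)+(C\times D)\subseteq C\times D$ decouples, because both factors are nonempty, into $x+C\subseteq C$ and $y+D\subseteq D$.
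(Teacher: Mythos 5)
Your proposal is correct, and on the parts the paper actually argues --- (iii), (iv), (v), and (vii) --- your derivations coincide with the paper's: (iii) ``$\Rightarrow$'' via the cone property from (i) plus $0\in C$, (iii) ``$\Leftarrow$'' and both directions of (v) via the sequential criterion (ii), (iv) by rewriting the intersection (note your rewriting silently uses $0\in C$ for the $\sigma=0$ endpoint, which is indeed the standing assumption and is also what the paper invokes), and (vii) by decoupling the product using nonemptiness of both factors. The genuine difference is self-containedness: the paper outsources (i) and (ii) to \cite{BC2017} (Propositions~6.49, 6.24, 6.51) and (vi) to \cite{Rock70} (Theorem~8.4), whereas you prove all three from scratch, and your arguments are sound. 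The identity $\lambda x + c = \lambda(x+c)+(1-\lambda)c$ together with the limit argument gives (i); the convex combinations $\alpha_n c_n + (1-\alpha_n)c \to x+c$ together with closedness of $C$ give the nontrivial direction of (ii); and the normalization-and-compactness argument $u_{n_k}=c_{n_k}/\|c_{n_k}\|\to u$ is exactly the standard proof of Rockafellar's theorem, with your remark correctly isolating compactness of the unit sphere as the sole point where finite-dimensionality enters --- consistent with the paper's Goebel--Kuczumow example showing the equivalence fails in infinite dimensions. What the paper's approach buys is brevity; what yours buys is a fact whose proof makes visible that everything rests only on convexity and closedness of $C$ together with $0\in C$.
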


\begin{proof}
\cref{f:rec0}:
This follows from \cite[Proposition~6.49(i)\&(v) and Proposition~6.24(ii)]{BC2017}. 

\cref{f:rec0.5}: See \cite[Proposition~6.51]{BC2017}. 

\cref{f:rec1}:
``$\Rightarrow$'':
Suppose that $x\in\rec(C)$ and let  $\rho\geq 0$. 
By \cref{f:rec0}, $\rho x\in \rec(C)$.
Because  $0\in C$, 
we now deduce from \cref{e:rec}
that $\rho x = \rho x+0\in C$. 
``$\Leftarrow$'':
Let $(\alpha_n)_\nnn$ be a sequence in $\left]0,1\right]$
such that $\alpha_n\to 0$.
By assumption, $(\forall\nnn)$ $c_n := x/\alpha_n \in C$.
Because $(\forall\nnn)$ $\alpha_n c_n = x$,
it follows from \cref{f:rec0.5} that $x\in \rec(C)$. 

\cref{f:rec2}:
``$\Rightarrow$'':
Suppose that $x\in\rec(C)$ and  let $\rho > 0$. 
By \cref{f:rec1}, $(1/\rho)x\in C$. 
Hence $x\in \rho C$.
``$\Leftarrow$'':
Suppose that $x\in \bigcap_{\rho>0}\rho C$ and let $\alpha \geq 0$.
If $\alpha=0$, then $\alpha x = 0 \in C$ (recall \cref{e:C}). 
And if $\alpha>0$,
then $1/\alpha>0$; thus, 
$x\in (1/\alpha) C$ by assumption and so $\alpha x \in C$.
In either case, we have shown that $\alpha x \in C$.
It now follows from \cref{f:rec1} that $x\in \rec(C)$. 

\cref{f:rec3}:
``$\Rightarrow$'':
We show the contrapositive and thus assume
that there is some 
$d\in X$ such that $C\cap \RP d$ is unbounded. 
Clearly, $d\neq 0$ and 
 there exists a sequence $(\rho_n)_\nnn$ in $\RPP$
such that $\rho_n\to\pinf$ and $c_n := \rho_n d \in C$. 
It follows that $d = (1/\rho_n)c_n$ and $1/\rho_n\to 0$. 
By \cref{f:rec0.5}, $d\in \rec(C)\smallsetminus\{0\}$. 
``$\Leftarrow$'':
Again, we show the contrapositive and thus assume that 
$d\in \rec(C)\smallsetminus\{0\}$.
By \cref{f:rec1}, $\RP d\subseteq C$ and so
$C\cap \RP d = \RP d$ is unbounded. 

\cref{f:rec4}: See \cite[Theorem~8.4]{Rock70}. 

\cref{f:rec5}: Clear from \cref{e:rec}. 
\end{proof}

It is clear that every linearly bounded set is bounded and 
that (by \cref{f:rec}\cref{f:rec3}\&\cref{f:rec4}) these notions
coincide provided that $X$ is finite-dimensional.
However, when $X$ is infinite-dimensional, then these notions do differ
as the next example illustrates\footnote{For more results
in this direction, see \cite{Phung}.}.

\begin{example}[\bf Goebel-Kuczumow]
(See \cite{GK1978}.)
Suppose that $X=\ell^2(I)$ where $I$ is infinite, 
let $(e_i)_{i\in I}$ be an orthonormal family in $X$
such that $\cspan\{e_i\}_{i\in I}=X$,
and let $(\beta_i)_{i\in I}$ be a family in $\RP$. 
Suppose that 
\begin{equation}
C = \menge{x = \sum_{i\in I}\xi_ie_i}{(\forall i\in I)\;\;|\xi_i|\leq \beta_i}. 
\end{equation}
Then $C$ is linearly bounded.
Moreover, $C$ is bounded $\Leftrightarrow$ $\sum_{i\in I}\beta_i^2 <\pinf$. 
In particular, if $X=\ell^2(\NN)$ and $C$ is given by 
\begin{equation}
C = \menge{x = (\xi_n)_\nnn\in\ell^2(\NN)}{(\forall \nnn)\;\;|\xi_n|\leq 1},
\end{equation}
then $C$ is unbounded but linearly bounded. 
\end{example}

The importance of the notion of the recession cone for our study
becomes apparent in the following results.

\begin{fact}
{(See \cite[Corollary~6.53]{BC2017}.)}
\label{f:rawsome}
Let $S$ be a nonempty closed convex subset of $(X\times\RR)\smallsetminus\{(0,0)\}$. 
Then
\begin{equation}
\clcone(S) = \cone(S)\uplus \rec(S).
\end{equation}
\end{fact}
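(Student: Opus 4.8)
The plan is to prove the set equality $\clcone(S)=\cone(S)\cup\rec(S)$ by establishing the two inclusions, and then to deal with the disjointness asserted by ``$\uplus$'' as a separate matter. Two ingredients will do the work: the description of $\clcone(S)$ as the closure of $\cone(S)=\bigcup_{\rho>0}\rho S$ (and the fact that, $X\times\RR$ being metrizable, this closure is sequential), and the sequential characterization of the recession cone in the form of \cref{f:rec0.5} --- which, although stated under $0\in C$, remains valid for any nonempty closed convex set (see \cite[Proposition~6.51]{BC2017}): one has $d\in\rec(S)$ exactly when there are $\rho_n\to 0^+$ and $s_n\in S$ with $\rho_ns_n\to d$. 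Throughout, the hypothesis $0\notin S$ together with the closedness of $S$ will be the decisive structural feature.

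The inclusion ``$\supseteq$'' is the routine half. We have $\cone(S)\subseteq\clcone(S)$ because the latter is the closure of the former, and for $\rec(S)\subseteq\clcone(S)$ I would take $d\in\rec(S)$, use the sequential characterization to write $d=\lim_n\rho_ns_n$ with $\rho_n\to 0^+$ and $s_n\in S$, and note that each $\rho_ns_n$ already lies in $\cone(S)$, so $d$ lies in its closure.

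The inclusion ``$\subseteq$'' carries the real content. Given $z\in\clcone(S)$, I would choose $\rho_n>0$ and $s_n\in S$ with $z=\lim_n\rho_ns_n$. The crucial point is that $(\rho_n)_\nnn$ is necessarily bounded: the convergent sequence $(\rho_ns_n)_\nnn$ is bounded, so if $\rho_n\to\pinf$ along a subsequence then $\norm{s_n}=\norm{\rho_ns_n}/\rho_n\to 0$, giving $s_n\to 0$ and hence $0\in S$ by closedness --- contradicting $0\notin S$. After passing to a subsequence I may therefore assume $\rho_n\to\rho\in\RP$. If $\rho>0$, then $s_n=(\rho_ns_n)/\rho_n\to z/\rho$, closedness yields $z/\rho\in S$, and so $z=\rho(z/\rho)\in\cone(S)$. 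If $\rho=0$, then $\rho_ns_n\to z$ with $\rho_n\to 0^+$ is exactly the situation covered by the sequential characterization, so $z\in\rec(S)$. In either case $z\in\cone(S)\cup\rec(S)$, which finishes the equality.

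The step I expect to be the main obstacle is the disjointness encoded in ``$\uplus$''. One half is immediate, namely $0\notin\cone(S)$: if $\rho s=0$ with $\rho>0$ then $s=0\in S$, impossible. However, I would be cautious about inferring $\cone(S)\cap\rec(S)=\emp$ from closedness and $0\notin S$ alone, since for the half-space $S=\menge{(x,\mu)\in X\times\RR}{\mu\geq 1}$ one computes $\cone(S)=\menge{(x,\mu)}{\mu>0}\subseteq\menge{(x,\mu)}{\mu\geq 0}=\rec(S)$, so the two pieces genuinely overlap. What forces disjointness is the extra geometry of the intended application \cref{e:rawsome}: there $S=C\times\{1\}$ sits in the hyperplane of last coordinate $1$, so every element of $\cone(S)$ has strictly positive last coordinate, whereas $\rec(S)=\rec(C)\times\{0\}$ by \cref{f:rec}\cref{f:rec5} has last coordinate $0$. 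I would therefore secure the disjoint decomposition through this separation of levels by the last coordinate, reading the ``$\uplus$'' as reflecting precisely that structural feature of the homogenization setting.
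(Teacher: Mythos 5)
Your proof of the set equality is correct, and since the paper offers no argument for this Fact at all --- it simply cites \cite[Corollary~6.53]{BC2017} --- what you have written supplies the missing content rather than deviating from a proof in the paper. The two-inclusion scheme is sound: the easy inclusion $\cone(S)\cup\rec(S)\subseteq\clcone(S)$; and, for the converse, the key step that $(\rho_n)_\nnn$ must be bounded (otherwise $s_n\to 0$ along a subsequence and closedness would force $(0,0)\in S$), followed by the dichotomy $\rho>0$ versus $\rho=0$. Your remark that the sequential characterization of the recession cone (\cref{f:rec}\cref{f:rec0.5}) does not require $0\in S$ is also correct --- given $\alpha_n\to 0^+$, $s_n\in S$, $\alpha_ns_n\to x$, one passes to the limit in $(1-t\alpha_n)s+t\alpha_n s_n\in S$ --- so the argument uses only strong convergence, convexity, and closedness and is valid in infinite dimensions.

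Your caution about ``$\uplus$'' is not a weakness of your proposal; it is a correct diagnosis of an overstatement in the Fact as formulated. Your half-space example is valid: for $S=\menge{(x,\mu)\in X\times\RR}{\mu\geq 1}$ one has $(0,0)\notin S$, yet $\cone(S)=\menge{(x,\mu)}{\mu>0}\subseteq\menge{(x,\mu)}{\mu\geq 0}=\rec(S)=\clcone(S)$, so the equality holds while the union is far from disjoint. The clean general criterion, which your argument almost states explicitly: because $\rec(S)$ is a cone, $\cone(S)\cap\rec(S)\neq\emp$ if and only if $S\cap\rec(S)\neq\emp$ (if $\rho s\in\rec(S)$ with $\rho>0$ and $s\in S$, then $s\in\rec(S)$, and conversely). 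Thus the disjointness holds exactly when $S\cap\rec(S)=\emp$, which is automatic for $S=C\times\{\pm 1\}$ since $\rec(S)=\rec(C)\times\{0\}$ by \cref{f:rec}\cref{f:rec5} lies in the hyperplane of last coordinate $0$, which $S$ misses; this is precisely your ``separation of levels,'' and it is all that \cref{e:rawsome+} and \cref{e:rawsome-} actually require. So the Fact should either drop the ``$\uplus$'' or add the hypothesis $S\cap\rec(S)=\emp$; your proposal proves everything in the statement that is true and correctly repairs the rest for the paper's intended application.
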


\begin{corollary}
Recall that \cref{e:C} holds. 
Then 
\begin{subequations}
\begin{align}
\clcone\big(C\times\{1\} \big)
&=
\cone\big(C\times\{1\}\big)
\uplus 
\big(\rec(C)\times\{0\}\big),\label{e:rawsome+}\\
\clcone\big(C\times\{-1\} \big)
&=
\cone\big(C\times\{-1\}\big)
\uplus 
\big(\rec(C)\times\{0\}\big). \label{e:rawsome-}
\end{align}
\end{subequations}
\end{corollary}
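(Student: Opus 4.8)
The plan is to obtain both identities as immediate instances of \cref{f:rawsome}, applied once to $S := C\times\{1\}$ and once to $S := C\times\{-1\}$. First I would verify that each of these sets satisfies the hypotheses of \cref{f:rawsome}. Since $0\in C$ by \cref{e:C}, the points $(0,1)$ and $(0,-1)$ lie in the respective sets, so they are nonempty; being products of the closed convex set $C$ with a singleton, each is closed and convex; and because the second coordinate is $\pm1\neq 0$, neither set meets $\{(0,0)\}$, so indeed $S\subseteq(X\times\RR)\smallsetminus\{(0,0)\}$. \cref{f:rawsome} then yields
\begin{equation*}
\clcone(C\times\{\pm 1\}) = \cone(C\times\{\pm 1\})\uplus\rec(C\times\{\pm 1\}).
\end{equation*}

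The only remaining task is to identify the recession cones $\rec(C\times\{\pm 1\})$. Here I would invoke \cref{f:rec}\cref{f:rec5} with $Y=\RR$ and $D=\{\pm 1\}$, which gives $\rec(C\times\{\pm 1\}) = \rec(C)\times\rec(\{\pm 1\})$. A one-line computation directly from \cref{e:rec} shows $\rec(\{1\}) = \menge{t\in\RR}{t+1=1} = \{0\}$, and likewise $\rec(\{-1\}) = \{0\}$. Substituting $\rec(\{\pm 1\}) = \{0\}$ into the displayed identity produces exactly \cref{e:rawsome+} and \cref{e:rawsome-}.

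I do not anticipate any genuine obstacle: the corollary is a clean specialization of \cref{f:rawsome}. The only point requiring a moment's care is that \cref{f:rec}\cref{f:rec5} is applied with the factor $D=\{\pm 1\}$, which does \emph{not} contain the origin; this is legitimate, since that item assumes only that $D$ is a nonempty convex set, and the definition \cref{e:rec} of the recession cone does not presuppose $0\in D$.
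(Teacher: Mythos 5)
Your proof is correct and follows essentially the same route as the paper: apply \cref{f:rawsome} to $S=C\times\{\pm 1\}$ and identify $\rec(C\times\{\pm 1\})=\rec(C)\times\{0\}$ via \cref{f:rec}\cref{f:rec5}. The only (negligible) difference is that you compute $\rec(\{\pm 1\})=\{0\}$ directly from \cref{e:rec}, whereas the paper cites \cref{f:rec}\cref{f:rec4}; your direct computation is, if anything, slightly cleaner since it avoids applying a fact stated for sets containing the origin to $\{\pm 1\}$.
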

\begin{proof}
Using \cref{f:rec}\ref{f:rec5}\&\ref{f:rec4}, 
we obtain  
$\rec(C\times\{1\})
= \rec(C)\times\rec(\{1\}) = \rec(C)\times\{0\}$ 
and 
$\rec(C\times\{-1\})
= \rec(C)\times\rec(\{-1\}) = \rec(C)\times\{0\}$. 
Because $(0,0)\notin C\times \{1\}$, 
the result now follows from \cref{f:rawsome}. 
\end{proof}

\begin{corollary}
Recall that \cref{e:C} holds, and
suppose that 
$C$ is linearly bounded (e.g., when $C$ is bounded). 
Then 
\begin{align}
\clcone\big(C\times\{1\} \big)
&=
\cone\big(C\times\{1\}\big)
\uplus 
\big\{(0,0)\big\} 
= 
\bigcup_{\rho \geq 0}\rho\big(C\times\{1\}\big). 
\end{align}
\end{corollary}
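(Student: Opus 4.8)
The plan is to deduce the statement directly from the preceding corollary, specialized via the linear boundedness hypothesis. First, since $C$ is linearly bounded, \cref{f:rec}\cref{f:rec3} yields $\rec(C)=\{0\}$, and hence $\rec(C)\times\{0\}=\{(0,0)\}$. Substituting this into \cref{e:rawsome+} immediately gives the first equality
\[
\clcone\big(C\times\{1\}\big)=\cone\big(C\times\{1\}\big)\uplus\big\{(0,0)\big\}.
\]

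For the second equality, I would simply unwind the definition of the conical hull. Recall that $\cone(C\times\{1\})=\bigcup_{\rho>0}\rho(C\times\{1\})$, so $\bigcup_{\rho\geq 0}\rho(C\times\{1\})$ differs from $\cone(C\times\{1\})$ only through the term at $\rho=0$, namely $0\cdot(C\times\{1\})=\{(0,0)\}$. Thus $\bigcup_{\rho\geq 0}\rho(C\times\{1\})=\cone(C\times\{1\})\cup\{(0,0)\}$.

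The only point requiring care — and really the only obstacle, though a mild one — is to verify that this last union is in fact disjoint, so that it matches the ``$\uplus$'' appearing in the first equality. This reduces to checking that $(0,0)\notin\cone(C\times\{1\})$: if one had $(0,0)=\rho(c,1)$ for some $\rho>0$ and $c\in C$, then comparing second coordinates would force $\rho=0$, a contradiction. Hence the two displayed right-hand sides coincide, and the proof is complete.
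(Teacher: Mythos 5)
Your proof is correct and follows the same route as the paper: linear boundedness gives $\rec(C)=\{0\}$ via \cref{f:rec}\cref{f:rec3}, and then the decomposition \cref{e:rawsome+} yields the claim. Your explicit verification of the second equality (including the disjointness check that $(0,0)\notin\cone(C\times\{1\})$) is a detail the paper leaves implicit, but it is the same argument in substance.
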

\begin{proof}
By \cref{f:rec}\cref{f:rec3}, $\rec(C) = \{0\}$; hence, 
the result follows from  \cref{e:rawsome+}. 
\end{proof}

\subsection{Polar set and polar cone}

We recall our standing assumption \cref{e:C}.
We denote by 
\begin{equation}
\sigma_C := x\mapsto \sup_{c\in C}\scal{c}{x},
\quad
\barc(C) := \dom \sigma_C = \menge{x\in X}{\sigma_C(x)<\pinf},
\end{equation}
\begin{equation}
C^\odot := \menge{x\in X}{\sigma_C(x)\leq 1}
= \menge{x\in X}{(\forall c\in C)\;\;\scal{c}{x}\leq 1}, 
\end{equation}
\begin{equation}
C^\ominus := \menge{x\in X}{\sigma_C(x)\leq 0}
= \menge{x\in X}{(\forall c\in C)\;\;\scal{c}{x}\leq 0}, 
\end{equation}
the \emph{support function},
the \emph{barrier cone}, 
the \emph{polar set},
and the 
\emph{polar cone} of $C$, 
respectively.

Here are some results concerning these objects. 
\begin{fact}
\label{p:lastlec}
The following hold.
\begin{enumerate}
\item 
\label{p:lastlec0}
$\sigma_C$ is sublinear, lower semicontinuous, and proper. 
\item 
\label{p:lastlec1}
$\barc(C) = \cone(C^\odot)$.
\item 
\label{p:lastlec2}
$(\barc(C))^\ominus = \rec(C)$.
\item 
\label{p:lastlec5}
$0\in C^\odot$, and $C^\odot$ is convex and closed. 
\item 
\label{p:lastlec6}
$C^{\odot\odot} = C$. 
\item 
\label{p:lastlec8}
$\barc(C^\odot) = \cone(C)$. 
\item 
\label{p:lastlec8.5}
$C^\ominus = (\cone(C))^\ominus = (\clcone(C))^\ominus$. 
\item 
\label{p:lastlec9}
$(\barc(C^\odot))^\ominus = \rec(C^\odot)$. 
\item 
\label{p:lastlec10}
$C^\ominus = \rec(C^\odot)$. 
\end{enumerate}
\end{fact}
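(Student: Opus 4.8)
The plan is to obtain \cref{p:lastlec10} by chaining together three of the identities already established in \cref{p:lastlec}, so that no fresh computation is required. The first thing to record is that $C^\odot$ itself satisfies the standing hypothesis \cref{e:C}: by \cref{p:lastlec}\cref{p:lastlec5} we have $0\in C^\odot$ and $C^\odot$ is closed and convex. Consequently every statement proved for a set obeying \cref{e:C} applies verbatim with $C$ replaced by $C^\odot$; in particular \cref{p:lastlec}\cref{p:lastlec9} is nothing but \cref{p:lastlec}\cref{p:lastlec2} read with $C^\odot$ in place of $C$, and this is the fact I intend to exploit.

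With that in hand I would work backwards from $\rec(C^\odot)$. First, \cref{p:lastlec}\cref{p:lastlec9} gives $\rec(C^\odot) = (\barc(C^\odot))^\ominus$. Next I would rewrite the barrier cone using \cref{p:lastlec}\cref{p:lastlec8}, namely $\barc(C^\odot)=\cone(C)$, turning this into $(\cone(C))^\ominus$. Finally, \cref{p:lastlec}\cref{p:lastlec8.5} identifies $(\cone(C))^\ominus$ with $C^\ominus$, closing the loop. Read in the forward direction, the chain is
\[
C^\ominus = (\cone(C))^\ominus = (\barc(C^\odot))^\ominus = \rec(C^\odot),
\]
which is exactly the desired equality.

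There is no genuine obstacle here, since the entire content of the statement resides in the preceding items; the only thing demanding care is bookkeeping. Specifically, I would make sure that the polar-cone operation $(\cdot)^\ominus$ is applied to the correct set at each step and that the ``$\odot$ versus $\ominus$'' distinction is respected, as the polar set $C^\odot$ and the polar cone $C^\ominus$ are different objects. The only mildly non-formulaic point is the opening remark that $C^\odot$ inherits \cref{e:C}, which is what licenses invoking \cref{p:lastlec}\cref{p:lastlec9}; once that is noted, the proof reduces to a one-line substitution.
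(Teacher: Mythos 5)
Your argument for \cref{p:lastlec}\cref{p:lastlec10} is correct and coincides exactly with the paper's own proof, which likewise deduces $C^\ominus = (\cone(C))^\ominus = (\barc(C^\odot))^\ominus = \rec(C^\odot)$ by chaining \cref{p:lastlec8.5}, \cref{p:lastlec8}, and \cref{p:lastlec9}, and which justifies \cref{p:lastlec9} precisely as you do, by invoking \cref{p:lastlec5} to see that $C^\odot$ satisfies the standing assumption \cref{e:C} and then applying \cref{p:lastlec2} to $C^\odot$ in place of $C$. Note only that your proposal addresses just the final item of the fact, taking \cref{p:lastlec0}--\cref{p:lastlec9} as given; the paper proves those remaining items separately (largely by citation to \cite{BC2017} plus short direct arguments), so your treatment of them as prior results matches the paper's own internal logic and introduces no circularity.
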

\begin{proof}
Let $x\in X$. 

\cref{p:lastlec0}: See \cite[Example~11.2]{BC2017}.

\cref{p:lastlec1}:
``$\subseteq$'': 
Suppose that $x\in\barc(C)$.
Then $\sigma_C(x)<\pinf$.
If $\sigma_C(x)\leq 1$, then $x\in C^\odot \subseteq \cone(C^\odot)$.
And if $\sigma_C(x)>1$, then $0<1/\sigma_C(x)<1$ and 
$1/\sigma_C(x)x\in C^\odot$; consequently,
$x\in\sigma_C(x)C^\odot \subseteq \cone(C^\odot)$. 
``$\supseteq$'': 
Suppose that $x\in\cone(C^\odot)$.
Then there exists $\alpha>0$ such that $x \in \alpha C^\odot$. 
Hence $x/\alpha \in C^\odot$, i.e., $\sigma_C(x/\alpha)\leq 1$ and
hence $\sigma_C(x)\leq\alpha<\pinf$. 

\cref{p:lastlec2}:
See \cite[Proposition~6.49(v)]{BC2017}. 

\cref{p:lastlec5}: 
This is clear from the definition of the polar set. 

\cref{p:lastlec6}:
Combine \cref{e:C} with \cite[Corollary~7.19(i)]{BC2017}. 

\cref{p:lastlec8}:
Using \cref{p:lastlec5}, \cref{p:lastlec1}, and \cref{p:lastlec6}, we have 
$\barc(C^\odot) = \cone(C^{\odot\odot})=\cone(C)$. 

\cref{p:lastlec8.5}:
This follows readily from the definitions. 

\cref{p:lastlec9}:
By \cref{p:lastlec5}, $C^\odot$ is convex, closed, and contains $0$.
Now apply  \cref{p:lastlec2} to $C^\odot$. 

\cref{p:lastlec10}: 
Using \cref{p:lastlec8.5}, \cref{p:lastlec8}, and \cref{p:lastlec9}, we deduce that 
$C^\ominus = (\cone(C))^\ominus = (\barc(C^\odot))^\ominus = \rec(C^\odot)$. 
\end{proof}

\subsection{Examples of polar pairs}

In this subsection, we will present various examples of polar pairs.
These are known in the literature; however, no proofs are presented.
For completeness, we include proofs here and in \cref{app:A}. 

\begin{example}{\bf (balls centred at $0$)}
\label{ex:duno}
Let $\|\cdot\|$ be an arbitrary norm, let $\gamma>0$, and suppose that
\begin{equation}
C = \menge{x\in X}{\|x\|\leq \gamma}. 
\end{equation}
Then 
\begin{equation}
C^\odot = \menge{y\in X}{\|y\|_*\leq 1/\gamma},
\end{equation}
where $\|\cdot\|_*$ denotes the dual norm. 
\end{example}
\begin{proof}
Let $y\in X$. We have, using the definition of the dual norm, 
\begin{equation}
\sigma_C(y) = \sup_{\|c\|\leq 1}\scal{y}{\gamma c}=\gamma\|y\|_*; 
\end{equation}
consequently, 
$y\in C^\odot$
$\Leftrightarrow$
$\gamma\|y\|_*\leq 1$
$\Leftrightarrow$
$\|y\|_*\leq 1/\gamma$ and the result follows.
\end{proof}

\begin{example}
\label{ex:viola}
Suppose that 
\begin{equation}
C = \overline{D + R},
\end{equation}
where $D$ is a closed convex subset of $X$, $0\in D$, 
and $R$ is a nonempty closed convex cone in $X$.
Then 
\begin{equation}
C^\odot = D^\odot \cap R^\ominus. 
\end{equation}
\end{example}
\begin{proof}
Indeed, $\sigma_C = \sigma_D + \sigma_R = \sigma_D + \iota_{R^\ominus}$ 
from which the result follows. 
\end{proof}

\begin{example}
\label{ex:shifty}
Suppose that 
\begin{equation}
C = D-d, 
\end{equation}
where $D$ is a closed convex subset of $X$ and $d\in D$.
Then 
$\sigma_C\colon y\mapsto   \sigma_D(y) - \scal{d}{y}$ and hence
\begin{equation}
C^\odot = \menge{y\in X}{\sigma_D(y)\leq 1+\scal{d}{y}}.
\end{equation}
\end{example}
\begin{proof}
We have $\sigma_C = \sigma_D - \scal{d}{\cdot}$ 
from which the result follows. 
\end{proof}

\begin{example}
\label{ex:shiftyball}
Let $D$ be the closed unit ball centered at $0$ in $X$, and
suppose that $d\in D$ such that $\|d\|=1$ and 
\begin{equation}
C = D - d.
\end{equation}
Then $\sigma_C\colon y\mapsto \|y\|-\scal{d}{y}$ and  
\begin{equation}
C^\odot = \menge{y\in X}{\|P_E(y)\|^2\leq 1+ 2\scal{d}{y}},
\end{equation}
where $E = \{d\}^\perp$. 
\end{example}
\begin{proof}
Let $y\in X$.
Using \cref{ex:shifty}, we see that 
\begin{equation}
\sigma_C(y)= \sigma_D(y)-\scal{d}{y}=\|y\|-\scal{d}{y}. 
\end{equation}
Set $E = \{d\}^\perp$. 
Note that $P_{\RR d}(y)=\scal{d}{y}d$ and thus 
$\|y\|^2-\|P_E(y)\|^2 = \|P_{\RR d}(y)\|^2 = \scal{d}{y}^2$. 
We therefore have the equivalences
\begin{subequations}
\begin{align}
y\in C^\odot
&\Leftrightarrow
\sigma_C(y)\leq 1\\
&\Leftrightarrow
\|y\|-\scal{d}{y}\leq 1\\
&\Leftrightarrow
\|y\|\leq 1+\scal{d}{y}\\
&\Leftrightarrow
\|y\|^2\leq \big(1+\scal{d}{y}\negthinspace\big)^2 \;\land\; 0\leq 1+\scal{d}{y}\\
&\Leftrightarrow
\|y\|^2\leq 1+\scal{d}{y}^2+2\scal{d}{y} \;\land\; 0\leq 1+\scal{d}{y}\\
&\Leftrightarrow
\|y\|^2-\scal{d}{y}^2 \leq 1+2\scal{d}{y} \;\land\; 0\leq 1+\scal{d}{y}\\
&\Leftrightarrow
\|P_E(y)\|^2\leq 1+2\scal{d}{y} \;\land\; 0\leq 1+\scal{d}{y}\\
&\Leftrightarrow
\|P_E(y)\|^2\leq 1+2\scal{d}{y} \;\land\; 0 \leq 1+2\scal{d}{y}\;\land\;0\leq 1+\scal{d}{y}\\
&\Leftrightarrow
\|P_E(y)\|^2\leq 1+2\scal{d}{y} \;\land\; -1/2 \leq \scal{d}{y}\;\land\;0\leq 1+\scal{d}{y}\\
&\Leftrightarrow
\|P_E(y)\|^2\leq 1+2\scal{d}{y} \;\land\; 1/2 \leq 1+\scal{d}{y}\;\land\;0\leq 1+\scal{d}{y}\\
&\Leftrightarrow
\|P_E(y)\|^2\leq 1+2\scal{d}{y} \;\land\; 1/2 \leq 1+\scal{d}{y}\\
&\Leftrightarrow
\|P_E(y)\|^2\leq 1+2\scal{d}{y}
\end{align}
\end{subequations}
and this completes the proof. 
\end{proof}

As a consequence of \cref{ex:duno} and the fact that the Euclidean norm is self-dual, we 
obtain the following: 
\begin{example}
\label{ex:1}
Let $\gamma>0$ and suppose that 
\begin{equation}
C = \menge{x\in X}{\|x\|\leq \gamma}
\end{equation}
Then
\begin{equation}
C^\odot = \menge{y\in X}{\|y\|\leq 1/\gamma}.
\end{equation}
\end{example}

The following four examples are from \cite[page~126]{Rock70}. 

\begin{example}
Suppose that $X=\RR^n$ and 
\begin{equation}
C = \menge{(x_1,\ldots,x_n)\in \RR^n}{x_1+\cdots+x_n\leq 1,\;\text{each } x_i\geq 0}.
\end{equation}
Then
\begin{equation}
C^\odot = \menge{(y_1,\ldots,y_n)\in \RR^n}{\text{each } y_i\leq 1}.
\end{equation}
\end{example}
\begin{proof}
See \cref{app:1}. 
\end{proof}

\begin{example}
\label{ex:1b}
(See also \cref{fig41}.) 
Suppose that $X=\RR^n$ and 
\begin{equation}
C = \menge{(x_1,\ldots,x_n)\in \RR^n}{|x_1|+\cdots+|x_n|\leq 1}.
\end{equation}
Then
\begin{equation}
C^\odot = \menge{(y_1,\ldots,y_n)\in \RR^n}{\text{each } |y_i|\leq 1}.
\end{equation}
\end{example}
\begin{proof}
Clear from \cref{ex:duno} because $\|\cdot\|_1^* = \|\cdot\|_\infty$. 
\end{proof}

\begin{figure}[ht]
\centering
\includegraphics[width=0.5\textwidth]{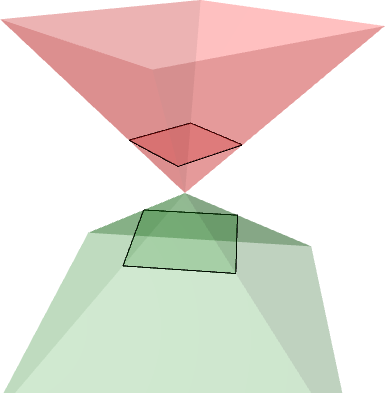}
\caption{$\clcone(C\times\{1\})$ and its polar cone, using $C$ from \cref{ex:1b}, with $n = 2$}
\label{fig41}
\end{figure}

\begin{example}
Suppose that $X=\RR^2$ and 
\begin{equation}
C = \menge{(x_1,x_2)\in \RR^2}{(x_1-1)^2+x_2^2\leq 1}.
\end{equation}
Then
\begin{equation}
C^\odot = \menge{(y_1,y_2)\in \RR^2}{y_1\leq (1-y_2^2)/2}.
\end{equation}
\end{example}
\begin{proof}
Note that $C=D-d$, where $D=$ closed unit ball centered at $(0,0)$ and $d=(-1,0)$. 
Then $E = \{d\}^\perp = \RR(0,1)$.
Let $y=(y_1,y_2)\in\RR^2$. 
Then \cref{ex:shiftyball} yields
$\sigma_C(y)=\|y\|-\scal{d}{y}=\tsqrt{y_1^2+y_2^2}+y_1$ and 
\begin{subequations}
\begin{align}
y\in C^\odot
&\Leftrightarrow
\|P_E(y)\|^2 \leq 1+2\scal{d}{y}\\
&\Leftrightarrow
y_2^2\leq 1-2y_1\\
&\Leftrightarrow
y_1 \leq (1-y_2^2)/2,
\end{align}
\end{subequations}
and we are done!
\end{proof}

\begin{example}
\label{ex:hyp}
(See also \cref{fig31}.)
Suppose that $X=\RR^2$ and 
\begin{equation}
C = \Menge{(x_1,x_2)\in \RR^2}{x_1\leq 1 - \sqrt{1+x_2^2}}.
\end{equation}
Then
\begin{equation}
C^\odot = \conv \Big(\{0\}\cup \Menge{(y_1,y_2)\in \RR^2}{y_1\geq (1+y_2^2)/2}\Big).
\end{equation}
\end{example}
\begin{proof}
See \cref{app:3}, where we not only provide a simpler description of $C^\odot$ but also
a formula for $\sigma_C$. 
\end{proof}

\begin{figure}
\centering
	\includegraphics[width=0.5\textwidth]{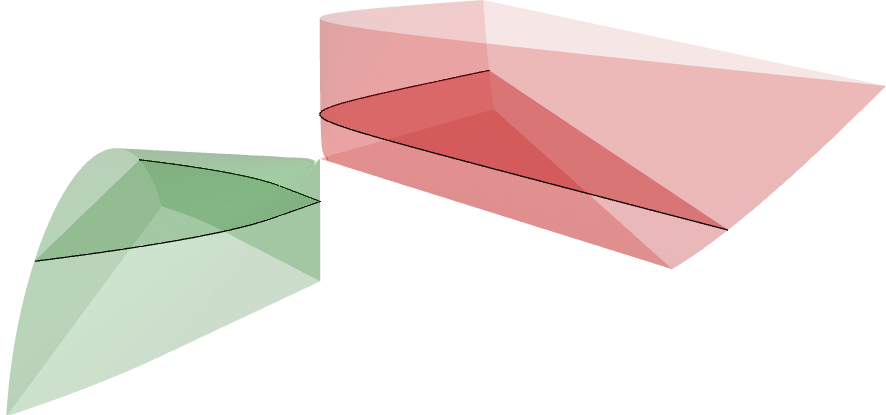}
	\caption{$\clcone(C\times\{1\})$ and its polar cone, using $C$ from \cref{ex:hyp}}
\label{fig31}
\end{figure}

Here are some examples from \cite[page~136]{Rock70}.

\begin{example}
Suppose that $X=\RR^n$, let $1<p<\pinf$, set 
\begin{equation}
C = \Menge{(x_1,\ldots,x_n)\in \RR^n}{|x_1|^p+\cdots+|x_n|^p\leq 1}
\end{equation}
and suppose $1<q<\pinf$ satisfies $\frac{1}{p}+\frac{1}{q}=1$. 
Then
\begin{equation}
C^\odot = \Menge{(y_1,\ldots,y_n)\in \RR^n}{|y_1|^q+\cdots+|y_n|^q\leq 1}.
\end{equation}
\end{example}
\begin{proof}
Clear from \cref{ex:duno} because the dual norm of the $p$-norm is the $q$-norm. 
\end{proof}

\begin{example}
Suppose that $X=\RR^n$, let $Q\in\RR^{n\times n}$ be symmetric and positive
definite, and set 
\begin{equation}
C = \Menge{x\in \RR^n}{\scal{x}{Qx}\leq 1}
\end{equation}
Then
\begin{equation}
C^\odot = \Menge{y\in \RR^n}{\tscal{y}{Q^{-1}y}\leq 1}.
\end{equation}
\end{example}
\begin{proof}
This follows from \cref{ex:duno} because the dual norm of the $Q$-norm is the $Q^{-1}$-norm. 
\end{proof}

\begin{example}
\label{ex:cbp}
(See also \cref{fig22})
Suppose that $X = \RR^2$, and set
\begin{equation}
C = \Menge{(x,y) \in \RR^2}{x^2 + y^2 \le 1 \text{ or } (|x| \le 1 \text{ and } y \ge 0)}
\end{equation}
Then
\begin{equation}
C^\odot = \Menge{(x,y) \in \RR^2}{x^2 + y^2 \le 1 \text{ and } y \le 0)}.
\end{equation}
\end{example}
\begin{proof}
This follows from \cref{ex:viola} with $D=D^\odot=$ the Euclidean unit ball 
(see \cref{ex:1}) and 
$R = \{0\}\times\RR_+$ for which $R^\ominus = \RR\times\RM$. 
\end{proof}

\begin{figure}
\centering
\includegraphics[width=0.5\textwidth]{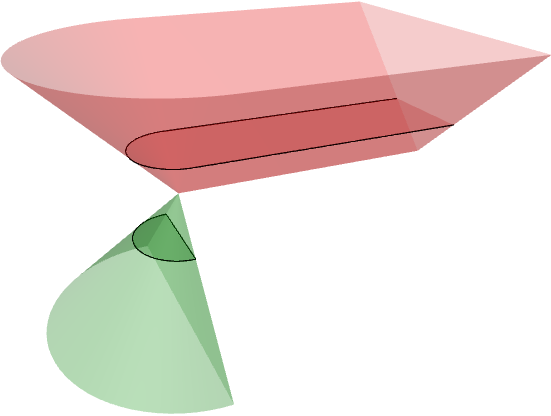}
\caption{$\clcone(C\times\{1\})$ and its polar cone, using $C$ from \cref{ex:cbp}}
\label{fig22}
\end{figure}

\section{Polar cone of the homogenization cone}

\label{sec:polar}

\begin{theorem}{\bf (polar cone)}
Recall that \cref{e:C} holds. 
Then
\begin{subequations}
\begin{align}
\big(\clcone(C\times\{1\}\big)^\ominus
&= \cone(C^\odot\times\{-1\})\uplus \big(C^\ominus \times\{0\}\big)
\label{e:pc1}\\
&= \cone(C^\odot\times\{-1\})\uplus \big(\rec(C^\odot) \times\{0\}\big)
\label{e:pc2}\\
&= \clcone\big(C^\odot\times\{-1\}).
\label{e:pc3}
\end{align}
\end{subequations}
\end{theorem}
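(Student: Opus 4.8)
The plan is to establish \eqref{e:pc1} by a direct computation of the polar cone of the underlying set $C\times\{1\}$, and then to read off \eqref{e:pc2} and \eqref{e:pc3} from results already in hand. The key preliminary observation is that the polar cone operation is insensitive to passing to the (closed) conical hull: for any nonempty $S\subseteq X\times\RR$ one has $(\clcone S)^\ominus = (\cone S)^\ominus = S^\ominus$, since scaling a vector by $\rho>0$ multiplies every inner product by $\rho$, and the inner product is continuous. Applying this with $S=C\times\{1\}$ reduces the computation of $\big(\clcone(C\times\{1\})\big)^\ominus$ to that of $(C\times\{1\})^\ominus$. Unwinding the definition, a pair $(y,\eta)\in X\times\RR$ lies in $(C\times\{1\})^\ominus$ exactly when $\scal{c}{y}+\eta\leq 0$ for every $c\in C$, i.e.\ when $\sigma_C(y)+\eta\leq 0$, equivalently $\sigma_C(y)\leq -\eta$.

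Next I would match this description against the right-hand side of \eqref{e:pc1}. Since $0\in C$, the support function satisfies $\sigma_C(y)\geq\scal{0}{y}=0$ for all $y$, so the condition $\sigma_C(y)\leq -\eta$ forces $\eta\leq 0$ and splits according to the sign of $\eta$. For $\eta=0$ the condition reads $\sigma_C(y)\leq 0$, which is exactly $y\in C^\ominus$, contributing the slice $C^\ominus\times\{0\}$. For $\eta<0$, writing $\rho=-\eta>0$, the condition $\sigma_C(y)\leq\rho$ is equivalent to $\sigma_C(y/\rho)\leq 1$, i.e.\ $y/\rho\in C^\odot$, so that $(y,\eta)=\rho(y/\rho,-1)\in\cone(C^\odot\times\{-1\})$; conversely every element of $\cone(C^\odot\times\{-1\})$ arises in this way. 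The two pieces are disjoint because they are distinguished by whether the last coordinate is zero or strictly negative, which yields the disjoint union in \eqref{e:pc1}.

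Finally, \eqref{e:pc2} is immediate from \eqref{e:pc1} upon substituting the identity $C^\ominus=\rec(C^\odot)$ provided by \cref{p:lastlec}\cref{p:lastlec10}. For \eqref{e:pc3}, I would invoke the earlier corollary: since $C^\odot$ is closed, convex, and contains $0$ by \cref{p:lastlec}\cref{p:lastlec5}, formula \eqref{e:rawsome-} applied with $C$ replaced by $C^\odot$ gives $\clcone(C^\odot\times\{-1\}) = \cone(C^\odot\times\{-1\})\uplus\big(\rec(C^\odot)\times\{0\}\big)$, which is precisely the right-hand side of \eqref{e:pc2}.

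I expect the only delicate point to be the case analysis on the sign of $\eta$ in the middle step. In particular, the value $\eta=0$ cannot be handled by the division-by-$\rho$ argument and must be treated separately through $C^\ominus$, and it is the assumption $0\in C$ (forcing $\sigma_C\geq 0$) that rules out the half-space $\eta>0$ and guarantees that the two slices exhaust $K^\ominus$ while remaining disjoint.
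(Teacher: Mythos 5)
Your proposal is correct and follows essentially the same route as the paper's own proof: the same reduction of $\big(\clcone(C\times\{1\})\big)^\ominus$ to $(C\times\{1\})^\ominus$, the same sign-based case split on the last coordinate (with $0\in C$ forcing it to be $\leq 0$), and the identical derivations of \cref{e:pc2} from $C^\ominus=\rec(C^\odot)$ and of \cref{e:pc3} from \cref{e:rawsome-} applied to $C^\odot$. The only differences are presentational: you phrase the computation via the support function $\sigma_C$ and a single chain of equivalences, whereas the paper argues by double inclusion with the raw inner-product inequalities.
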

\begin{proof}
We first prove \cref{e:pc1}. 
Let $d\in X$. 
Then 
\begin{subequations}
\begin{align}
(d,-1)\in \big(\clcone(C\times\{1\})\big)^\ominus
&\Leftrightarrow
(d,-1)\in (C\times\{1\})^\ominus \\
&\Leftrightarrow
(\forall c\in C)\;\; \scal{(d,-1)}{(c,1)}\leq 0\\
&\Leftrightarrow
(\forall c\in C)\;\; 
\scal{d}{c}-1\leq 0\\
&\Leftrightarrow d\in C^\odot.
\end{align}
\end{subequations}
This shows $C^\odot\times\{-1\}\subseteq (\clcone(C\times\{1\}))^\ominus$.
Because $(\clcone(C\times\{1\}))^\ominus$ is a closed cone, we deduce that 
\begin{equation}
\label{e:211128a}
\cone\big(C^\odot\times\{-1\}\big) \subseteq 
\clcone\big(C^\odot\times\{-1\}\big)\subseteq \big(\clcone(C\times\{1\})\big)^\ominus.
\end{equation}
If $d\in C^\ominus$, then 
$(\forall c\in C)$ 
$\scal{(d,0)}{(c,1)} = \scal{d}{c}\leq 0$; 
hence $(d,0)\in (C\times\{1\})^\ominus$.
Thus $C^\ominus \times\{0\}\subseteq (C\times\{1\})^\ominus$ and therefore
\begin{equation}
\label{e:211128b}
C^\ominus \times\{0\}\subseteq \big(\clcone(C\times\{1\})\big)^\ominus.
\end{equation}
Combining \cref{e:211128a} and \cref{e:211128b} yields 
\begin{equation}
\label{e:211128c}
\cone\big(C^\odot\times\{-1\}\big) \uplus 
\big(C^\ominus \times\{0\}\big)\subseteq \big(\clcone(C\times\{1\})\big)^\ominus.
\end{equation}
Conversely, let 
\begin{equation}
\label{e:211128d}
(d,s)\in \big(\clcone(C\times\{1\})\big)^\ominus.
\end{equation}
Then 
\begin{equation}
\label{e:211128e} 
(\forall c\in C)\quad
\scal{d}{c}+s=\scal{(d,s)}{(c,1)}\leq 0.
\end{equation}
Choosing $c=0$ gives 
\begin{equation}
s\leq 0.
\end{equation}
We distinguish two cases.

\emph{Case~1:}
$s=0$.\\
Then \cref{e:211128e} gives $(\forall c\in C)$ 
$\scal{d}{c}\leq 0$, i.e., $d\in C^\ominus$ and thus 
\begin{equation}
\label{e:211128g}
(d,s)\in C^\ominus\times\{0\}. 
\end{equation}

\emph{Case~2:}
$s=-|s|<0$.\\
Dividing \cref{e:211128e} by $|s|>0$ gives $(\forall c\in C)$ 
$\scal{d/|s|}{c} - 1 \leq 0$.
Hence $d/|s|\in C^\odot$, which implies 
$(d/|s|,-1)\in C^\odot\times\{-1\}$ and further
\begin{equation}
\label{e:211128i}
(d,s)=|s|\big(d/|s|,-1\big)\in |s|\big(C^\odot\times\{-1\}\big) \subseteq 
\cone\big(C^\odot\times\{-1\}\big). 
\end{equation}
Combining \cref{e:211128d}, \cref{e:211128g}, and \cref{e:211128i}, 
we see that 
\begin{equation}
\label{e:211128j}
\big(\clcone(C\times\{1\})\big)^\ominus
\subseteq
\cone\big(C^\odot\times\{-1\}\big)
\uplus
\big(C^\ominus\times\{0\}\big). 
\end{equation}

In turn, combining \cref{e:211128c} with \cref{e:211128j} yields \cref{e:pc1}. 

Equation~\cref{e:pc2} follows immediately from 
\cref{p:lastlec}\cref{p:lastlec10}. 

Finally, apply \cref{e:rawsome-} to the set $C^\odot$ to obtain \cref{e:pc3}. 
\end{proof}

\section{Scaled sets: projection and (squared) distance function}

\label{sec:scaled}

Throughout this section, we fix 
\begin{empheq}[box=\mybluebox]{equation}
\label{e:y}
y\in X.
\end{empheq}
Recall our standing assumption \cref{e:C}. 
It is well known (see, e.g., \cite[Proposition~29.1(ii)]{BC2017}) 
that the projection operator satisfies
\begin{equation}
\label{e:PaC}
(\forall \alpha>0)\quad 
P_{\alpha C}(y) = \alpha P_C\big({y}/{\alpha}\big);
\end{equation}
in turn, we have 
\begin{equation}
\label{e:daC}
(\forall \alpha>0)\quad 
d_{\alpha C}(y) = \alpha d_C(y/\alpha). 
\end{equation}
The limiting behaviour of \cref{e:PaC} and \cref{e:daC} is understood:
Combining \cref{e:PaC} and \cref{f:rec}\ref{f:rec2} with 
\cite[Proposition~29.8 and Proposition~29.7]{BC2017} yields 
\begin{equation}
\label{e:Prec}
\lim_{\alpha\to 0^+}
\alpha P_C\big(\tfrac{y}{\alpha}\big)
= \lim_{\alpha\to 0^+} P_{\alpha C}(y) = P_{\rec(C)}(y)
\end{equation}
and 
\begin{equation}
\label{e:Precinf}
\lim_{\alpha\to \pinf}
\alpha P_C\big(\tfrac{y}{\alpha}\big)
= \lim_{\alpha\to \pinf} P_{\alpha C}(y) = P_{\clcone(C)}(y). 
\end{equation}
The limit statements 
\cref{e:Prec} and \cref{e:Precinf} yield 
\begin{equation}
\label{e:drec}
\lim_{\alpha\to 0^+} \alpha d_{C}(y/\alpha) 
= \lim_{\alpha\to 0^+} d_{\alpha C}(y) 
= d_{\rec(C)}(y)
\end{equation}
and 
\begin{equation}
\label{e:drecinf}
\lim_{\alpha\to \pinf} \alpha d_{C}(y/\alpha) 
= \lim_{\alpha\to \pinf} d_{\alpha C}(y) 
= d_{\clcone(C)}(y),
\end{equation}
respectively.

We now define the function
\begin{empheq}[box=\mybluebox]{equation}
\label{e:phi}
\phi\colon \RPP\to\RR\colon \alpha\mapsto d^2_{\alpha C}(y)=\alpha^2 d^2_C(y/\alpha),
\end{empheq}
where we used \cref{e:daC} for the identity. 

\begin{theorem}[\bf convexity of $\phi$]
\label{t:phi}
The function $\RPP\to\RR\colon \alpha \mapsto d_{\alpha C}(y)$ is convex.
Moreover, the function $\phi$ defined in \cref{e:phi} is  convex, 
differentiable with 
\begin{equation}
\label{e:211207a}
\phi'(\alpha) 
=-2\alpha\scal{P_C(y/\alpha)}{(y/\alpha)-P_C(y/\alpha)}\leq 0,
\end{equation}
decreasing, and bounded below by $0$. 
In fact, 
\begin{equation}
\label{e:211207b}
\lim_{\alpha\to 0^+}\phi(\alpha) = \sup_{\alpha>0} \phi(\alpha) = d^2_{\rec(C)}(y)
\end{equation}
and 
\begin{equation}
\label{e:211207c}
\lim_{\alpha\to \pinf}\phi(\alpha) = \inf_{\alpha>0} \phi(\alpha) = d^2_{\clcone(C)}(y).
\end{equation}
\end{theorem}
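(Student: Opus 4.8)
The plan is to establish the claims in the order stated, reducing each either to the convexity of $C$, to the known smoothness of the squared distance function, or to the limit formulas \cref{e:drec} and \cref{e:drecinf}. First I would prove that $g\colon\alpha\mapsto d_{\alpha C}(y)$ is convex by a direct convex-combination argument. Given $\alpha_1,\alpha_2>0$, $\lambda\in[0,1]$, and $\alpha:=\lambda\alpha_1+(1-\lambda)\alpha_2$, set $c_i:=P_C(y/\alpha_i)$ so that $d_{\alpha_i C}(y)=\|y-\alpha_ic_i\|$ by \cref{e:PaC}. The point $c:=\alpha^{-1}(\lambda\alpha_1c_1+(1-\lambda)\alpha_2c_2)$ is a convex combination of $c_1,c_2\in C$, hence lies in $C$; since $\alpha c=\lambda\alpha_1c_1+(1-\lambda)\alpha_2c_2$, the triangle inequality gives $d_{\alpha C}(y)\le\|y-\alpha c\|\le\lambda\|y-\alpha_1c_1\|+(1-\lambda)\|y-\alpha_2c_2\|$, which is exactly the desired inequality. (This is the joint convexity of the perspective of $d_C$ restricted to the slice through $y$.) Since $g\ge0$, the function $\phi=g^2$ is the composition of the nondecreasing convex map $t\mapsto t^2$ on $\RP$ with the convex $g$, hence convex.

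For differentiability, I would invoke the fact that $d^2_C$ is (Fr\'echet) continuously differentiable on $X$ with $\nabla d^2_C=2(\Id-P_C)$ (see \cite{BC2017}), continuity of the gradient following from nonexpansiveness of $P_C$. Writing $\phi(\alpha)=\alpha^2d^2_C(y/\alpha)$ as in \cref{e:phi} and applying the product and chain rules yields $\phi'(\alpha)=2\alpha d^2_C(y/\alpha)-2\scal{(y/\alpha)-P_C(y/\alpha)}{y}$. Substituting $y=\alpha(y/\alpha)$ and $d^2_C(y/\alpha)=\|(y/\alpha)-P_C(y/\alpha)\|^2$ and collecting the two inner products over the common vector $(y/\alpha)-P_C(y/\alpha)$ produces the stated formula \cref{e:211207a}.

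The sign $\phi'(\alpha)\le0$ is where the standing assumption $0\in C$ from \cref{e:C} is essential: the variational characterization of the projection gives $\scal{(y/\alpha)-P_C(y/\alpha)}{c-P_C(y/\alpha)}\le0$ for every $c\in C$, and taking $c=0\in C$ yields $\scal{P_C(y/\alpha)}{(y/\alpha)-P_C(y/\alpha)}\ge0$; multiplying by $-2\alpha<0$ shows $\phi'\le0$. Consequently $\phi$ is decreasing, and it is bounded below by $0$ because $\phi=d^2_{\alpha C}(y)\ge0$.

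Finally, monotonicity identifies the extrema with the endpoint limits, namely $\sup_{\alpha>0}\phi(\alpha)=\lim_{\alpha\to0^+}\phi(\alpha)$ and $\inf_{\alpha>0}\phi(\alpha)=\lim_{\alpha\to+\infty}\phi(\alpha)$. Squaring the limits \cref{e:drec} and \cref{e:drecinf} (equivalently, using continuity of $t\mapsto t^2$ together with $\phi=g^2$) evaluates these as $d^2_{\rec(C)}(y)$ and $d^2_{\clcone(C)}(y)$, giving \cref{e:211207b} and \cref{e:211207c}. I expect the main obstacle to be the differentiation step: one must justify the chain rule through the $C^1$ squared-distance function and carry out the algebra that collapses the two inner-product terms into the compact form \cref{e:211207a}; everything else is either the elementary convexity argument or a direct consequence of the previously established limit formulas.
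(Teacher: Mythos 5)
Your proposal is correct, and its overall skeleton matches the paper's proof: convexity of $\alpha\mapsto d_{\alpha C}(y)$ first, then convexity of $\phi$ by composition with $t\mapsto t^2$, then the derivative formula via $\nabla d_C^2=2(\Id-P_C)$ and the chain rule, the sign via $0\in C$, and finally the limits \cref{e:211207b}--\cref{e:211207c} from monotonicity together with \cref{e:drec} and \cref{e:drecinf}. Where you genuinely diverge is in two sub-steps. For the convexity of $\alpha\mapsto d_{\alpha C}(y)$, the paper cites the perspective/``adjoint'' construction (that $\beta\mapsto d_C(\beta y)$ is convex, hence $\alpha\mapsto\alpha d_C(y/\alpha)$ is convex, via \cite[Example~8.33]{BC2017}), whereas you give a self-contained convex-combination argument: with $c_i=P_C(y/\alpha_i)$ and $c=\alpha^{-1}\big(\lambda\alpha_1c_1+(1-\lambda)\alpha_2c_2\big)\in C$, the identity $y-\alpha c=\lambda(y-\alpha_1c_1)+(1-\lambda)(y-\alpha_2c_2)$ and the triangle inequality do the job. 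Your argument is more elementary and makes the role of the convexity of $C$ explicit; the paper's citation is shorter and situates the fact within the general theory of perspective functions. For the sign of $\phi'$, the paper invokes firm nonexpansiveness of $P_C$ together with $P_C(0)=0$ (via \cite[Proposition~4.4]{BC2017}), while you use the variational characterization of the projection with the test point $c=0\in C$; these yield the same inequality $\scal{P_C(y/\alpha)}{(y/\alpha)-P_C(y/\alpha)}\geq 0$, and yours is arguably the more direct route. The differentiation algebra you describe collapses to \cref{e:211207a} exactly as in the paper, so no gap remains.
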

\begin{proof}
The function $\beta \mapsto d_C(\beta y)$ is convex;
therefore, its \emph{adjoint} $\alpha \mapsto \alpha d_C(y/\alpha)=d_{\alpha C}(y)$ 
is convex 
on $\RPP$ by, e.g., \cite[Example~8.33]{BC2017} and \cref{e:daC}. 
The convexity of $\phi$ now follows from 
the composition rule \cite[Proposition~8.21]{BC2017}. 

We now turn to differentiability of $\phi$.
Using the fact that $\nabla d^2_C = 2(\Id-P_C)$ 
(see, e.g., \cite[Corollary~12.31]{BC2017}) and the chain rule, 
we compute 
\begin{subequations}
\begin{align}
\phi'(\alpha)
&= \frac{d}{d\alpha} \big(\alpha^2 d_C^2(y/\alpha)\big)\\
&=2\alpha d_C^2(y/\alpha) + \alpha^2 2\bscal{(\Id-P_C)(y/\alpha)}{-y/\alpha^2}\\
&=2\alpha\big(\|y/\alpha\|^2 + \|P_C(y/\alpha)\|^2 - 2\scal{y/\alpha}{P_C(y/\alpha)}\big)\\
&\qquad 
-2\|y\|^2/\alpha+2\scal{P_C(y/\alpha)}{y}\\
&= 2\alpha \|P_C(y/\alpha)\|^2 -2\scal{y}{P_C(y/\alpha)}\\
&=2\alpha\scal{P_C(y/\alpha)}{P_C(y/\alpha)-(y/\alpha)}\\
&=-2\alpha\scal{P_C(y/\alpha)}{(\Id-P_C)(y/\alpha)},
\end{align}
\end{subequations}
as claimed.
Because $P_C(0)=0$ (by \cref{e:C}) and $P_C$ is firmly nonexpansive 
(see \cite[Proposition~4.16]{BC2017}), 
we obtain from \cite[Proposition~4.4]{BC2017} that 
$\scal{P_C(y/\alpha)}{(y/\alpha)-P_C(y/\alpha)}
=
\scal{P_C(y/\alpha)-P_C(0)}{(\Id-P_C)(y/\alpha)-(\Id-P_C)(0)}\geq 0$ 
which yields the inequality in \cref{e:211207a}. 
Therefore, $\phi$ is decreasing and clearly bounded below by $0$. 
Combining this with \cref{e:drec} and \cref{e:drecinf}, we obtain 
\cref{e:211207b} and \cref{e:211207c}. 
\end{proof}

We have now all the tools together to study the 
lower semicontinuous hull of $\phi$ which we also define in the next result. 

\begin{theorem}[\bf lower semicontinuous hull of $\phi$]
\label{t:Phi}
Recall the definition of $\phi$ in \cref{e:phi}. 
Then the function  
\begin{empheq}[box=\mybluebox]{equation}
\label{e:Phi}
\Phi\colon \RR\to\RR\colon \alpha \mapsto 
\begin{cases}
\pinf, &\text{if $\alpha<0$;}\\
d^2_{\rec(C)}(y), &\text{if $\alpha=0$;}\\
\phi(\alpha) = d^2_{\alpha C}(y), &\text{if $\alpha>0$}
\end{cases}
\end{empheq}
is the lower semicontinuous hull of $\phi$. 
Moreover, 
\begin{equation}
\label{e:Phi'}
(\forall \alpha>0)\quad
\Phi'(\alpha)
= 
-2\alpha\scal{P_C(y/\alpha)}{(y/\alpha)-P_C(y/\alpha)}\leq 0,
\end{equation}
\begin{equation}
\label{e:Phi'(0)}
\minf = \Phi'_-(0)
\leq \Phi'_+(0)=\lim_{\alpha\to 0^+}\Phi'(\alpha) = \inf_{\alpha>0}\Phi'(\alpha) \leq 0, 
\end{equation}
and 
\begin{equation}
\label{e:Phi'(pinf)}
\lim_{\alpha\to\pinf}\Phi'(\alpha) = 0.
\end{equation}
\end{theorem}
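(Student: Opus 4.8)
The plan is to dispatch the three assertions in turn, with \cref{t:phi} doing essentially all of the heavy lifting.

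First, for the lower semicontinuous hull: since $\phi$ is convex (and, being real-valued and bounded below on the nonempty set $\RPP$, proper) by \cref{t:phi}, its lower semicontinuous hull is its closure as a convex function, and this closure coincides with $\phi$ on the relatively open domain $\RPP$. It therefore remains only to identify the closure at the boundary. Because $\dom\phi=\RPP$, the closed domain lies in $\RP$, so the hull takes the value $\pinf$ for every $\alpha<0$; and by the line-segment principle the hull at $\alpha=0$ equals $\lim_{\alpha\to0^+}\phi(\alpha)$, which is $d^2_{\rec(C)}(y)$ by \cref{e:211207b}. This is exactly the definition \cref{e:Phi} of $\Phi$, so $\Phi$ is the lower semicontinuous hull of $\phi$.

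Next, \cref{e:Phi'} is immediate: $\Phi$ restricts to $\phi$ on $\RPP$, so $\Phi'=\phi'$ there and \cref{e:211207a} applies. For \cref{e:Phi'(0)}, convexity of $\phi$ makes $\phi'$ nondecreasing, whence $\inf_{\alpha>0}\phi'(\alpha)=\lim_{\alpha\to0^+}\phi'(\alpha)$, a common value that is $\leq0$ since each $\phi'(\alpha)\leq0$. As $\Phi$ is right-continuous at $0$ (by the limit in \cref{e:211207b}) and differentiable on $\RPP$, a mean value argument identifies the right derivative of the closure with this interior limit, $\Phi'_+(0)=\lim_{\alpha\to0^+}\phi'(\alpha)$. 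The left derivative is $\minf$ because $\Phi(0)$ is finite while $\Phi\equiv\pinf$ on $\opint{\minf,0}$, so every left difference quotient equals $\minf$; the bound $\Phi'_-(0)\leq\Phi'_+(0)$ is then automatic.

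For \cref{e:Phi'(pinf)}, monotonicity of $\phi'$ together with $\phi'\leq0$ guarantees that $L:=\lim_{\alpha\to\pinf}\phi'(\alpha)=\sup_{\alpha>0}\phi'(\alpha)$ exists and is $\leq0$. I claim $L=0$: were $L<0$, then $\phi'\leq L<0$ throughout, and integrating would force $\phi(\alpha)\to\minf$, contradicting the fact from \cref{t:phi} that $\phi\geq0$. (Alternatively, one may rewrite $\phi'(\alpha)=-2\scal{P_C(y/\alpha)}{y-\alpha P_C(y/\alpha)}$ and pass to the limit using $P_C(y/\alpha)\to P_C(0)=0$ and $\alpha P_C(y/\alpha)\to P_{\clcone(C)}(y)$ from \cref{e:Precinf}.) The one genuinely delicate step is the middle equality in \cref{e:Phi'(0)}, namely $\Phi'_+(0)=\lim_{\alpha\to0^+}\Phi'(\alpha)$: it is precisely here that the boundary value $\Phi(0)=d^2_{\rec(C)}(y)$, and hence the right-continuity furnished by \cref{e:211207b}, is indispensable, since it is what allows the right derivative of the \emph{closure} to agree with the interior derivative limit. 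Everything else is routine convex-analytic bookkeeping.
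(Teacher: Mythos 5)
Your proof is correct, and it follows the same skeleton as the paper's: every assertion is reduced to the facts established in \cref{t:phi}, namely the derivative formula \cref{e:211207a} and the limits \cref{e:211207b} and \cref{e:211207c}. The difference is one of technique: where the paper dispatches each step with a citation, you prove the cited facts directly. For the hull, the paper invokes \cite[Proposition~9.33]{BC2017}, while you run the standard line-segment argument for closures of convex functions; for the one-sided derivatives at $0$, the paper cites \cite[Theorem~24.1]{Rock70}, while you obtain $\Phi'_+(0)=\lim_{\alpha\to 0^+}\Phi'(\alpha)$ from the mean value theorem (legitimate, because right-continuity of $\Phi$ at $0$ --- which is exactly what \cref{e:211207b} provides --- makes $\Phi$ continuous on $[0,h]$, so the MVT plus monotonicity of $\Phi'$ squeezes the difference quotients) and compute $\Phi'_-(0)=\minf$ directly from the left difference quotients. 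The only genuinely distinct argument concerns \cref{e:Phi'(pinf)}: the paper notes that $\Phi$ is decreasing and bounded below, so $0\in\cran\Phi'$ by \cite[Corollary~16.52]{BC2017}, which forces $\sup\Phi'(\RPP)=\lim_{\alpha\to\pinf}\Phi'(\alpha)=0$; you instead integrate the hypothetical bound $\Phi'\leq L<0$ to contradict $\Phi\geq 0$, and your alternative --- passing to the limit in $\Phi'(\alpha)=-2\scal{P_C(y/\alpha)}{y-\alpha P_C(y/\alpha)}$ using $P_C(y/\alpha)\to P_C(0)=0$ and \cref{e:Precinf} --- is also valid and arguably the most transparent of the three. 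What your route buys is a self-contained, elementary proof requiring no external convex-analysis machinery beyond \cref{t:phi}; what the paper's buys is brevity and alignment with the standard toolkit it cites throughout.
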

\begin{proof}
The fact that $\Phi$ is the lower semicontinuous hull of $\phi$ 
follows from \cite[Proposition~9.33]{BC2017} and \cref{e:211207b}. 
The formula for $\Phi'(\alpha)$ presented in \cref{e:Phi'} is clear from 
\cref{e:211207a}, while 
the formula for $\Phi'_{\mp}(0)$ in \cref{e:Phi'(0)} 
is a consequence of \cite[Theorem~24.1]{Rock70}. 
Because $\phi$ is decreasing (see \cref{t:phi}), so is $\Phi$. 
Hence $\sigma := \sup\Phi'(\RPP)=\lim_{\alpha\to\pinf}\Phi'(\alpha) \leq 0$.
On the other hand, because $\Phi$ is bounded below, 
we deduce from \cite[Corollary~16.52]{BC2017} that $0\in\cran\Phi'$.
Altogether, the proof is complete.
\end{proof}

\begin{remark}
Suppose that $C$ is bounded.
Then the set $\{P_C(y/\alpha)\}_{\alpha>0}$ is bounded, 
and $\lim_{\alpha\to 0^+} (y-\alpha P_C(y/\alpha)) = y 
- P_{\rec(C)}(y) = y$ using \cref{e:Prec} and 
\cref{f:rec}\cref{f:rec3}. 
In view of \cref{e:Phi'} and \cref{e:Phi'(0)}, we deduce that 
\begin{equation}
\minf<\Phi'_+(0) \leq 0. 
\end{equation}
\end{remark}

\begin{remark}
Suppose that $C$ is a cone. 
Then $(\forall\alpha>0)$ $C = \alpha C = \rec(C)$, 
and hence $\Phi = d^2_C(y) + \iota_{\RP}$. 
Thus $\Phi'_+(0) = 0$. 
\end{remark}

\section{Projection onto the homogenization cone}

\label{sec:proj}

Throughout this section, we fix 
\begin{empheq}[box=\mybluebox]{equation}
\label{e:ys}
(y,s) \in X\times\RR.
\end{empheq}
Recalling the definition \cref{e:K}, our goal is to determine
$P_K(y,s)$. 
From \cref{e:rawsome+} it is clear that 
\begin{equation}
\text{either~}
P_K(y,s) \in \cone(C\times\{1\})
\text{~or~}
P_K(y,s)\in \rec(C)\times\{0\}
\end{equation}
but not both\footnote{We always use ``either/or'' in the mutually exclusive sense.}.
Note that $P_K(y,s)$ is either $P_{\cone(C\times\{1\})}(y,s)$ or 
$P_{\rec(C)\times\{0\}}(y,s)$; however, 
note that the vector $P_{\cone(C\times\{1\})}(y,s)$ need not exist because 
$\cone(C\times\{1\})$ is convex but not necessarily closed.
On the other hand,
$P_{\rec(C)\times\{0\}}(y,s)$ always exists and is equal to 
$(P_{\rec(C)}(y),0)$.
Altogether, if both vectors exist, then $P_K(y,s)$ is the one that is
closer to $(y,s)$.
This discussion leads us to investigate $P_{\cone(C\times\{1\})}(y,s)$.
To this end,  we pick a generic element from 
$\cone(C\times \{1\})$, say 
\begin{equation}
\alpha(c,1),
\quad
\text{where $\alpha>0$ and $c\in C$.}
\end{equation}
Then the squared distance between $\alpha(c,1)=(\alpha c,\alpha)$ and $(y,s)$ 
is given by 
\begin{align}
\label{e:211130a}
\|(\alpha c,\alpha)-(y,s)\|^2
&= \|\alpha c-y\|^2 + (\alpha-s)^2. 
\end{align}
For the next little while, we think of $\alpha>0$ being fixed.
Minimizing \cref{e:211130a} over $c\in C$ 
leads us to the optimal $c$ being 
$\tfrac{1}{\alpha}P_{\alpha C}(y)= P_C(y/\alpha)$ 
using \cref{e:PaC}.
Having found the optimal $c$ for a fixed $\alpha>0$, we now
vary $\alpha$ over the positive reals.
In other words, we are interested in minimizing the function
\begin{subequations}
\label{e:psi}
\begin{empheq}[box=\mybluebox]{align}
\psi \colon \RPP\to\RR \colon \alpha \mapsto 
& \|P_{\alpha C}(y)-y\|^2 + (\alpha-s)^2
= d^2_{\alpha C}(y)+(\alpha - s)^2\\
&= \alpha^2\|(y/\alpha)-P_C(y/\alpha)\|^2 + (\alpha - s)^2,
\end{empheq}
\end{subequations}
where we used \cref{e:PaC} and \cref{e:daC}. 
Note that $\psi$ is the sum of $\phi$ from \cref{e:phi} and a quadratic.
Hence, our work in \cref{sec:scaled} pays off and leads readily to the following result.

\begin{theorem}[\bf $P_K(y,s)$ and the lower semicontinuous hull of $\psi$]
\label{t:Psi}
Recall the definition of $\psi$ in \cref{e:psi}.
Then the function 
\begin{empheq}[box=\mybluebox]{equation}
\label{e:Psi}
\Psi\colon\RR\to\RR\colon \alpha \mapsto 
\begin{cases}
\pinf, &\text{if $\alpha<0$;}\\
d^2_{\rec C}(y)+s^2, &\text{if $\alpha=0$;}\\
\alpha^2d^2_{C}(y/\alpha)+(\alpha-s)^2, &\text{if $\alpha>0$.}
\end{cases}
\end{empheq}
is the lower semicontinuous hull of $\psi$. 
The function $\Psi$ is strictly convex and supercoercive --- 
we denote its unique minimizer by $\alpha^*\geq 0$.
We then have 
\begin{equation}
\label{e:P_K}
P_K(y,s) = 
\begin{cases}
\big(P_{\rec(C)}(y),0\big), &\text{if $\alpha^*=0$;}\\
\big(\alpha^*P_{C}(y/\alpha^*),\alpha^*\big), &\text{if $\alpha^*>0$.}
\end{cases}
\end{equation}
\end{theorem}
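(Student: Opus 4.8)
The plan is to piggyback on the analysis of \cref{sec:scaled} together with the disjoint decomposition \cref{e:rawsome+} of $K$, and then to read off the projection by a case distinction on the minimizer $\alpha^*$.

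First I would settle the structural claims about $\Psi$. Writing $\psi = \phi + q$ with $\phi$ as in \cref{e:phi} and $q\colon\alpha\mapsto(\alpha-s)^2$ continuous and real-valued, I note that $\psi$ agrees with $\phi+q$ on $\RPP$ and that, by \cref{e:211207b}, $\phi$ admits the limit $\lim_{\alpha\to0^+}\phi(\alpha)=d^2_{\rec(C)}(y)$; hence the lower semicontinuous hull of $\psi$ takes the value $d^2_{\rec(C)}(y)+s^2$ at $0$, equals $\psi$ on $\RPP$, and is $\pinf$ on $\opint{-\infty,0}$, i.e.\ it equals $\Psi$ (this is just \cref{t:Phi} with the finite continuous summand $q$ added). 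Convexity of $\Psi$ follows since $\Phi$ is convex (by \cref{t:phi}, the lower semicontinuous hull preserving convexity) and $q$ is strictly convex, so $\Psi$ is strictly convex. Because $\Phi\geq 0$, we have $\Psi\geq q$ on $\RPP$, so $\Psi(\alpha)/\alpha\geq q(\alpha)/\alpha\to\pinf$ as $\alpha\to\pinf$, while $\Psi\equiv\pinf$ on the negative axis; thus $\Psi$ is supercoercive. A proper, lower semicontinuous, strictly convex, supercoercive function on $\RR$ has a unique minimizer $\alpha^*\geq 0$ (see, e.g., \cite{BC2017}).

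Next I would compute the squared distance from $(y,s)$ to $K$ using $K=\cone(C\times\{1\})\uplus(\rec(C)\times\{0\})$. The distance to the recession part is attained at $(P_{\rec(C)}(y),0)$ and equals $\Psi(0)=d^2_{\rec(C)}(y)+s^2$. For the cone part, a generic element is $\alpha(c,1)$ with $\alpha>0$ and $c\in C$; minimizing $\|\alpha c-y\|^2+(\alpha-s)^2$ over $c\in C$ for fixed $\alpha$ yields, via \cref{e:PaC}, the optimizer $c=P_C(y/\alpha)$ with value $\psi(\alpha)$. Hence the distance to the cone part is $\inf_{\alpha>0}\psi(\alpha)=\inf_{\alpha>0}\Psi(\alpha)$, and therefore $d^2_K(y,s)=\min\{\inf_{\alpha>0}\Psi(\alpha),\Psi(0)\}=\min_{\alpha\geq0}\Psi(\alpha)=\Psi(\alpha^*)$.

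Finally I would identify $P_K(y,s)$ by cases. If $\alpha^*>0$, strict convexity and uniqueness of the minimizer give $\psi(\alpha^*)=\Psi(\alpha^*)<\Psi(0)$, so the cone part is strictly closer and its infimum is attained at the point $\alpha^*(P_C(y/\alpha^*),1)\in\cone(C\times\{1\})$, forcing $P_K(y,s)=(\alpha^*P_C(y/\alpha^*),\alpha^*)$. If $\alpha^*=0$, then $\Psi(0)<\Psi(\alpha)=\psi(\alpha)$ for every $\alpha>0$, so the infimum over the cone part equals $\Psi(0)$ but is never attained, whereas $(P_{\rec(C)}(y),0)\in K$ realizes $d^2_K(y,s)=\Psi(0)$; by uniqueness of the projection, $P_K(y,s)=(P_{\rec(C)}(y),0)$. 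The main obstacle is precisely this last case: the infimum of the squared distance over the non-closed cone $\cone(C\times\{1\})$ equals $\Psi(0)$ yet is not attained inside that cone, so the projection cannot be read off from a minimizing $\alpha$; the resolution rests on the limit $\lim_{\alpha\to0^+}\phi(\alpha)=d^2_{\rec(C)}(y)$ from \cref{e:211207b}, which certifies that this missing infimum coincides with the distance to the recession part and hence pins $P_K$ down to the recession-cone point.
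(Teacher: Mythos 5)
Your proposal is correct and takes essentially the same route as the paper: $\Psi$ is identified as the lower semicontinuous hull via \cref{t:Phi} plus the quadratic term, strict convexity and (super)coercivity come from that term, and the projection formula is read off from the decomposition \cref{e:rawsome+} together with the fixed-$\alpha$ minimization over $c\in C$ carried out in the discussion preceding the theorem. The only difference is one of rigor, not of method: you spell out the two cases on $\alpha^*$ (in particular the non-attainment of the infimum over the non-closed cone part when $\alpha^*=0$, resolved by uniqueness of projections onto the closed convex set $K$), which the paper compresses into ``clear by the discussion preceding this theorem.''
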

\begin{proof}
The formula for \cref{e:Psi} is clear from \cref{e:Phi} and the definition 
of $\psi$. 
The quadratic term $\alpha\mapsto (\alpha-s)^2$ ensures that 
$\Psi$ is strictly convex and coercive;
in turn, this guarantees the existence and uniqueness of $\alpha^*$. 
The formula \cref{e:P_K} for $P_K(y,s)$ is clear by the discussion
preceding this theorem.
\end{proof}

Because we assume that $P_C$ is available, it is easy to evaluate
$\Psi$ at positive real numbers; however, $0$ is challenging 
because we don't assume that we have access to the recession cone and 
its projection.
Our ultimate goal is to present an algorithm for finding the minimizer of $\Psi$;
thus, we collect derivative information next.

\begin{proposition}
The function $\Psi$ defined in \cref{e:Psi} is differentiable on $\RPP$:
if $\alpha>0$, then
\begin{equation}
\Psi'(\alpha)
= 2(\alpha-s) - 2\alpha\scal{P_C(y/\alpha)}{(y/\alpha)-P_C(y/\alpha)}.
\end{equation}
Moreover, 
\begin{equation}
\Psi'_+(0) = -2s -2\lim_{\alpha\to 0^+}\alpha\scal{P_C(y/\alpha)}{(y/\alpha)-P_C(y/\alpha)} 
\end{equation}
and 
\begin{equation}
\lim_{\alpha\to\pinf}\Psi'(\alpha) = \pinf.
\end{equation}
\end{proposition}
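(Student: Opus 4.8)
The plan is to observe that $\Psi$ is nothing but the function $\Phi$ from \cref{t:Phi} plus the smooth quadratic $q\colon\alpha\mapsto(\alpha-s)^2$, so that every assertion reduces to a property of $\Phi$ already established in \cref{t:Phi} together with the trivial derivative of $q$. First I would record the decomposition $\Psi=\Phi+q$ on all of $\RR$ (under the convention $\pinf+c=\pinf$ for $c\in\RR$): comparing \cref{e:Psi} with \cref{e:Phi} case by case (for $\alpha<0$, for $\alpha=0$ where one uses $q(0)=s^2$, and for $\alpha>0$) confirms this identity. Since $q$ is differentiable on all of $\RR$ with $q'(\alpha)=2(\alpha-s)$, while $\Phi$ is differentiable on $\RPP$ with $\Phi'$ given by \cref{e:Phi'}, the sum rule immediately yields the differentiability of $\Psi$ on $\RPP$ together with the stated formula $\Psi'(\alpha)=2(\alpha-s)-2\alpha\scal{P_C(y/\alpha)}{(y/\alpha)-P_C(y/\alpha)}$.

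For the right derivative at $0$, I would use the additivity of one-sided derivatives (valid here because $q'(0)$ is finite, even should $\Phi'_+(0)$ be $\minf$): thus $\Psi'_+(0)=\Phi'_+(0)+q'(0)=\Phi'_+(0)-2s$. Now \cref{e:Phi'(0)} identifies $\Phi'_+(0)$ with $\lim_{\alpha\to0^+}\Phi'(\alpha)$, and \cref{e:Phi'} gives $\Phi'(\alpha)=-2\alpha\scal{P_C(y/\alpha)}{(y/\alpha)-P_C(y/\alpha)}$; substituting produces the claimed formula for $\Psi'_+(0)$. I would stress that the existence of the limit $\lim_{\alpha\to0^+}\alpha\scal{P_C(y/\alpha)}{(y/\alpha)-P_C(y/\alpha)}$ is not an extra hypothesis but is exactly what \cref{e:Phi'(0)} guarantees, since that equation identifies $\lim_{\alpha\to0^+}\Phi'(\alpha)$ with $\inf_{\alpha>0}\Phi'(\alpha)$.

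Finally, for the behaviour at $+\infty$, I would write $\Psi'(\alpha)=2(\alpha-s)+\Phi'(\alpha)$ and invoke \cref{e:Phi'(pinf)}, which states $\lim_{\alpha\to\pinf}\Phi'(\alpha)=0$; since $2(\alpha-s)\to\pinf$, the sum tends to $\pinf$.

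I expect no genuine obstacle: the whole proposition is a corollary of \cref{t:Phi}. The only step requiring a word of care is the passage $\Psi'_+(0)=\lim_{\alpha\to0^+}\Psi'(\alpha)$, which rests on the convexity of $\Psi$ (equivalently, on the monotonicity of the derivative of a convex function, already invoked via \cite[Theorem~24.1]{Rock70} in the proof of \cref{t:Phi}); I would simply route this through the decomposition $\Psi=\Phi+q$ rather than re-deriving it.
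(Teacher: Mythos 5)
Your proposal is correct and is exactly the paper's argument: the paper's proof simply observes that $\Psi$ is the sum of $\Phi$ from \cref{e:Phi} and $\alpha\mapsto(\alpha-s)^2$ and then invokes \cref{t:Phi}, which is precisely your decomposition $\Psi=\Phi+q$. Your write-up merely spells out the case-by-case verification of the decomposition and the one-sided derivative additivity that the paper leaves implicit.
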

\begin{proof}
The function $\Psi$ is the sum of $\Phi$ from \cref{e:Phi} 
and $\alpha\mapsto (\alpha-s)^2$.
Hence the result follows from \cref{t:Phi}.
\end{proof}

The function $\Psi$ is a supercoercive strictly convex function with domain 
$\RP$ such that $\lim_{\alpha\to\pinf}\Psi(\alpha)=\pinf$.
Denote the unique minimizer of $\Psi$ by $\alpha^*$. 
Consider $\Psi_+'(0)$.
If $\Psi_+'(0)\geq 0$, then $\Psi$ must be increasing and thus 
$\alpha^*=0$.
If $\Psi_+'(0)<0$, then $\Psi$ first decreases before it again
increases and so $\alpha^*>0$.
Unfortunately, we don't have a simple formula for $\Psi'_+(0)$.

Nonetheless, the above discussion leads to the following algorithm for finding $\alpha^*$ and hence an approximation of $P_K(y,s)$: 

\begin{algo}[\bf finding $P_K$]
\label{algo1}
Given are $0<\alpha<\beta$ (say $\alpha=1$ and $\beta=2$) and
an error tolerance $\epsilon>0$ (say $\epsilon = 10^{-6}$), and $n=0$. 
Now proceed as follows.
{\rm 
\begin{itemize}
\item[1.] If $\beta-\alpha<\varepsilon$, 
then STOP with $\alpha^*=\alpha$. 
\item[2.] If $\Psi'(\alpha)=0$, then STOP with $\alpha^*=\alpha$; 
if $\Psi'(\beta)=0$, then STOP with $\alpha^*=\beta$. 
\item[3.] If $\Psi'(\alpha)<0<\Psi'(\beta)$, 
then perform a standard bisection to find $\alpha^*$ 
with $\Psi'(\alpha^*)=0$ in the interval 
$[\alpha,\beta]$. (This is justified because $\Psi'$ is continuous on $\RPP$.)
\item[4.] If $0<\Psi'(\alpha)$, then update 
$\beta := \alpha$, $\alpha := \alpha/2$, $n := n+1$, and return to STEP~1. 
\item[5.] If $\Psi'(\beta)<0$, then update
$\alpha := \beta$, $\beta:= 2\beta$, $n := n+1$, and return to STEP~1. 
\end{itemize}
} 
Then 
\begin{equation}
\big(\alpha^*P_C(y/\alpha^*),\alpha^*\big)
\end{equation}
serves as an approximation of $P_K(y,s)$. 
\end{algo}

\section{Examples}
\label{sec:examples}

We again assume that 
\begin{empheq}[box=\mybluebox]{equation}
(y,s)\in X\times\RR
\end{empheq}
in this section. 

\subsection{Ice cream cone revisited}

In this subsection, we assume that $\gamma>0$ and that 
\begin{equation}
C = \menge{x\in X}{\|x-z\|\leq\gamma},
\quad
\text{where $\|z\|\leq\gamma$.}
\end{equation}
Then $C$ is obviously (linearly) bounded and hence 
$\rec C=\{0\}$ by \cref{f:rec}\cref{f:rec3}. 
Let $x\in X$. 
We have 
\begin{subequations}
\label{e:211208a}
\begin{align}
P_C(x)&=
\begin{cases}
z+\displaystyle\gamma\frac{x-z}{\|x-z\|}, &\text{if $\|x-z\|>\gamma$;}\\
x, &\text{if $\|x-z\|\leq\gamma$}
\end{cases}\\
&= z + \gamma\cdot \frac{x-z}{\max\{\|x-z\|,\gamma\}}. 
\end{align}
\end{subequations}
Thus
\begin{align}
\label{e:211208b}
x-P_C(x) = \Big(1 - \frac{\gamma}{\max\{\|x-z\|,\gamma\}}\Big) 
(x-z) 
\end{align}
and 
\begin{align}
\label{e:220306a}
\scal{P_C(x)}{x-P_C(x)} 
&= \Big(1 - \frac{\gamma}{\max\{\|x-z\|,\gamma\}}\Big)
\bigg(\scal{x-z}{z}+ \frac{\gamma\|x-z\|^2}{\max\{\|x-z\|,\gamma\}}\bigg).
\end{align}
Moreover, 
$d_C(x) = \max\{0,\|x-z\|-\gamma\}$ 
and thus
\begin{equation}
d^2_C(x) = {\max}^2\{0,\|x-z\|-\gamma\}. 
\end{equation}
Hence, for $\alpha>0$, we have 
\begin{equation}
\label{e:220306b}
\phi(\alpha)= 
\alpha^2d_{C}^2(y/\alpha)
=\alpha^2{\max}^2\{0,\|(y/\alpha)-z\|-\gamma\}
= {\max}^2\{0,\|y-\alpha z\|-\alpha\gamma \}
\end{equation}
and, in view of \cref{e:211207a} and \cref{e:220306a}, 
\begin{subequations}
\begin{align}
\phi'(\alpha) 
&= 
-2\alpha\scal{P_C(y/\alpha)}{(y/\alpha)-P_C(y/\alpha)} \\
&=-2\alpha\bigg(1 - \frac{\gamma}{\max\{\|y/\alpha-z\|,\gamma\}} \bigg)
\bigg(\scal{z}{y/\alpha-z}+\frac{\gamma\|y/\alpha-z\|^2}{\max\{\|y/\alpha-z\|,\gamma\}}\bigg) \label{e:220327a} \\
&= -2\bigg(1 - \frac{\alpha\gamma}{\max\{\|y-\alpha z\|,\alpha\gamma\}} \bigg)
\bigg(\scal{z}{y-\alpha z}+\frac{\gamma\|y-\alpha z\|^2}{\max\{\|y-\alpha z\|,\alpha \gamma\}}\bigg). \label{e:220327c}
\end{align}
\end{subequations}
Because $\Psi(\alpha) = \phi(\alpha)+(\alpha-s)^2$ and hence 
$\Psi'(\alpha) = \phi'(\alpha)+2(\alpha-s)$ when $\alpha>0$, 
we thus have $\Psi(\alpha)$ and $\Psi'(\alpha)$ available to us
for usage in \cref{algo1}.

Let $\alpha^* \in \RP$ be a minimizer of $\Psi$.
If $\alpha^* > 0$, then $\Psi'(\alpha^*) = 0$. Otherwise, if $\alpha^* = 0$, then $\Psi'_+(0) \ge 0$. To compute $\Psi'_+(0)$, we first compute $\lim_{\alpha \to 0^+} \Phi'(\alpha) = \lim_{\alpha \to 0^+} \phi'(\alpha)$ by taking the limit of \cref{e:220327c} as $\alpha \to 0^+$.

We start by assuming that $y = 0$. 
Because $0\in B[z; \gamma]$ by assumption, we have $\|z\| \leq \gamma$. 
Thus, if $\alpha>0$, then 
\begin{equation}
1 - \frac{\alpha\gamma}{\max\{\|0-\alpha z\|,\alpha \gamma\}} = 1 - \frac{\gamma}{\max\{\|0/\alpha - z\|,\gamma\}} = 0,
\end{equation}
which in view of \cref{e:220327c} yields $\phi'(\alpha) = 0$.
So $\Phi'_+(0) = 0$ when $y = 0$.

Now suppose that $y\neq 0$.
Then for all sufficiently small $\alpha>0$, we have 
$\max\{\|y-\alpha z\|,\alpha \gamma\} = \|y - \alpha z\|$. 
Using now \cref{e:220327c} yields
\begin{subequations}
\begin{align}
\Phi'_+(0) &= \lim_{\alpha \to 0^+} -2\bigg(1 - \frac{\alpha\gamma}{\|y-\alpha z\|} \bigg)
\bigg(\scal{z}{y-\alpha z}+\frac{\gamma\|y-\alpha z\|^2}{\|y-\alpha z\|}\bigg) \\
&= -2\langle z, y \rangle - 2\gamma \|y\|. \label{e:220327d}
\end{align}
\end{subequations}

Altogether, in either case, we have
\begin{equation}
\label{e:220530a}
\Phi'_+(0) = -2\langle z, y \rangle - 2\gamma \|y\|.
\end{equation}

Since $\Psi(\alpha) = \Phi(\alpha) + (\alpha - s)^2$ for all $\alpha \ge 0$,
we deduce that 
\begin{equation}
\label{e:220328a}
\Psi'_+(0) = -2s - 2\langle z, y \rangle - 2\gamma \|y\|.
\end{equation}


We now consider further cases.

\emph{Case~1:}
$\Psi_+'(0) = -2s - 2\langle z, y \rangle - 2\gamma \|y\| \geq 0$, i.e., $s + \langle z, y \rangle + \gamma \|y\| \leq 0$. \\
Then $\alpha^*=0$ minimizes $\Psi$
and $P_{\rec C}(y)=0$ because $C$ is bounded. 
Hence \cref{e:P_K} gives 
\begin{equation}
P_K(y,s) = (0,0).
\end{equation}

\emph{Case~2:}
$\Psi_+'(0) = -2s - 2\langle z, y \rangle - 2\gamma \|y\| < 0$, i.e., $s + \langle z, y \rangle + \gamma\|y\|>0$. \\
In this case, the minimizer $\alpha^*$ of $\Psi$ is positive.
To find $\alpha^*$, 
we set the derivative 
$\Psi'(\alpha) = 2(\alpha - s) + \phi'(\alpha)$ equal to $0$. 
Using \cref{e:220327c}, this leads to
\begin{equation}
\label{e:220330a}
\alpha-s = \bigg(1 - \frac{\alpha\gamma}{\max\{\|y-\alpha z\|,\alpha\gamma\}} \bigg)
\bigg(\scal{z}{y-\alpha z}+\frac{\gamma\|y-\alpha z\|^2}{\max\{\|y-\alpha z\|,\alpha \gamma\}}\bigg).
\end{equation}

We consider further cases because of the maximum.

\emph{Case~2.1:} $\|y - \alpha z\|\leq \alpha\gamma$.\\
Then $\max\{\|y - \alpha z\| , \alpha \gamma\} = \alpha \gamma$, 
so $\alpha-s = 0$ and so $\alpha^*=s$. 
Hence the  vector component of $P_K(y,s)$ is 
$\alpha^* P_C(y/\alpha^*)=sP_C(y/s)$
Now $\|y - \alpha^* z\|\leq \alpha^*\gamma$, 
i.e., $\|y - sz\| \leq s\gamma$. 
Thus $\|y/s - z\|\leq \gamma$ and so $y/s\in C$.
Hence $P_C(y/s)=y/s$ and so $sP_C(y/s)=s(y/s) = y$.
Therefore, 
\begin{equation}
P_K(y,s) = (y,s),
\end{equation}
i.e., $(y,s)$ was already in $K$.

\emph{Case~2.2:} $\|y - \alpha z\| > \alpha\gamma$.\\
Then $\max\{\|y - \alpha z\|, \alpha \gamma\} = \|y - \alpha z\|$
and \cref{e:220330a} turns into
\begin{equation}
	\alpha - s = \bigg(1 - \frac{\alpha\gamma}{\|y-\alpha z\|} \bigg)
	\big(\scal{z}{y-\alpha z}+\gamma\|y-\alpha z\|\big);
\end{equation}
equivalently,
\begin{equation}
	\label{e:220408a}
	\big(\alpha \gamma^2 + \alpha - s - \scal{z}{y-\alpha z}\big)\|y - \alpha z\| = -3\alpha \gamma \scal{z}{y} + 2\alpha^2 \gamma \|z\|^2 + \gamma \|y\|^2.
\end{equation}
Squaring both sides, we see $\alpha$ must satisfy
\begin{equation}
	\label{e:220408b}
	\big(\alpha \gamma^2 + \alpha - s - \scal{z}{y-\alpha z}\big)^2\|y - \alpha z\|^2 - \big(-3\alpha \gamma \scal{z}{y} + 2\alpha^2 \gamma \|z\|^2 + \gamma \|y\|^2\big)^2 = 0.
\end{equation}
Expanding \cref{e:220408b} and re-arranging, 
we obtain a quartic equation in $\alpha$ that our optimal value $\alpha^*$ 
must satisfy:
\begin{equation}
	\label{e:220405a}
	\xi_0 + \xi_1 \alpha + \xi_2 \alpha^2 + \xi_3 \alpha^3 + \xi_4 \alpha^4 = 0,
\end{equation}
where
\begin{subequations}
	\label{e:220405b}
	\begin{align}
		\xi_0 &= (s + \scal{z}{y})^2\|y\|^2 - \gamma^2 \|y\|^4 \\
		\xi_1 &= -2(s + \scal{z}{y})(\gamma^2 + 1 + \|z\|^2)\|y\|^2 - 2\scal{y}{z}(s + \scal{z}{y})^2 + 6\gamma^2 \|y\|^2 \scal{z}{y} \\
		\xi_2 &= -4\gamma^2\|y\|^2\|z\|^2 - 9\gamma^2\scal{z}{y}^2 + (\gamma^2 + \|z\|^2 + 1)^2\|y\|^2 \notag\\
		&\quad + (\scal{z}{y} + s)^2\|z\|^2 + 4(\gamma^2 + \|z\|^2 + 1)(\scal{z}{y} + s)\scal{z}{y} \\
		\xi_3 &= 12\gamma^2 \scal{y}{z}\|z\|^2 - 2(\gamma^2 + \|z\|^2 + 1)(\scal{z}{y} + s)\|z\|^2 \notag\\
&\quad		- 2(\gamma^2 + \|z\|^2 + 1)^2\scal{z}{y} \\
		\xi_4 &=  \|z\|^2\big(1+(\gamma+\|z\|)^2\big)\big(1+(\gamma-\|z\|)^2\big)
	\end{align}
\end{subequations}
This is as far as we go with regards to computing an analytic solution for general $z$. In principle, this quartic could be solved analytically by using the usual methods (e.g., Ferrari's method); however, the symbolic and complicated nature of the coefficients seems to be make very hard to obtain the correct solution of the quartic. 
Given the convexity of $\Psi$, we can expect there to be a unique $\alpha > 0$ satisfying \cref{e:220408a}. However, in getting to \cref{e:220408b} and \cref{e:220405a}, the process of squaring both sides may introduce an erroneous solution, even allowing for $\alpha > 0$. So, should an interested reader decide to press onwards and solve \cref{e:220405a} analytically, they may be presented with multiple positive solutions, of which only one can be correct.

For the purpose of making further progress analytically, let us consider the case when
\begin{equation}
z = 0. 
\end{equation}
In this case, \cref{e:220408a} becomes
\begin{equation}
	\big(\alpha \gamma^2 + \alpha - s\big)\|y\| = \gamma \|y\|^2.
\end{equation}
Our assumption that $\|y - \alpha z\| > \alpha \gamma$, i.e., 
$\|y\|>\alpha\gamma$, and that 
$\|z\| \leq \gamma$
preclude the possibility that $y = 0$. 
Thus we obtain 
\begin{equation}
	\label{e:220408c}
	\alpha^* = \frac{s + \gamma \|y\|}{\gamma^2 + 1}
\end{equation}
which we know to be positive.








Now 
$\|y\|>\alpha^*\gamma$
$\Leftrightarrow$
$\|y/\alpha^*\|>\gamma$
$\Leftrightarrow$
$y/\alpha^*\notin C$
$\Leftrightarrow$
\begin{equation}
P_C(y/\alpha^*)= \gamma\frac{y/\alpha^*}{\|y/\alpha^*\|}=\gamma \frac{y}{\|y\|}
\end{equation}
and so 
\begin{equation}
\alpha^* P_C(y/\alpha^*)=
\alpha^* \gamma \frac{y}{\|y\|}
= \gamma\frac{s+\gamma\|y\|}{1+\gamma^2}\frac{y}{\|y\|}.
\end{equation}
We thus conclude that 
\begin{equation}
P_K(y,s) = 
\frac{s+\gamma\|y\|}{1+\gamma^2}\bigg(\frac{\gamma y}{\|y\|}, 1 \bigg). 
\end{equation}
Having exhausted all cases, we summarize the $z = 0$ discussion in the following:

\begin{example}[\bf ice cream cone]
Let $\gamma>0$ and suppose that 
\begin{equation}
C = \menge{x\in X}{\|x\|\leq\gamma}. 
\end{equation}
Then 
\begin{equation}
K = \clcone\big(C\times\{1\}\big) = \bigcup_{\rho\geq 0}\rho\big(C\times\{1\}\big)
\end{equation}
and 
\begin{equation}
\label{e:ice}
P_K(y,s) = 
\begin{cases}
(y,s), &\text{if $\|y\|\leq\gamma s$;}\\[+2mm]
(0,0), &\text{if $\gamma\|y\|\leq -s$;}\\[+2mm]
\displaystyle\frac{s+\gamma\|y\|}{1+\gamma^2}\Big(\frac{\gamma y}{\|y\|}, 1 \Big), &\text{otherwise.}
\end{cases}
\end{equation}
\end{example}
Of course, \cref{e:ice} is consistent with the literature;
see, e.g., \cite[Exercise~29.11]{BC2017}
or \cite[Example~6.37]{Beck} (when $\gamma=1$).

We also present a particular case where $z \neq 0$, both to showcase the complexity in pursuing an analytic solution, and to demonstrate the use of \cref{algo1}.

\begin{example}
\label{ex:bpzneq0}
(See also \cref{fig2a}.)
Suppose that $X = \mathbb{R}^2$ and that 
\begin{equation}
C = \menge{(x_1, x_2) \in X}{(x_1 - 1)^2 + x_2^2 \le 1},
\end{equation}
i.e., $z = (1, 0)$ and $\gamma = 1$. 
Then the projection of $(y, s) = (1, 2, 1)$ onto $K = \clcone\big(C\times\{1\}\big)$ is approximately
$$P_K(y, s) \approx (1.1327162, 1.4226203, 1.4597189).$$

To show this, let us first observe that we are in the difficult Case~2.2: 
$s + \scal{z}{y} + \gamma \|y\| > 0$. 
This implies $(y, s)$ does not belong to $K$, and thus projects 
onto $\cone(C \times 1)$.
Using \cref{e:220405a} with our particular $z$, $y$, $\gamma$, and $s$, we obtain
\begin{equation}
\label{e:220512}
5 \alpha^4 - 18 \alpha^3 + 44 \alpha^2 - 38 \alpha - 5 = 0,
\end{equation}
the left hand side of which is irreducible over $\mathbb{Q}$ 
For this reason, we turn to \cref{algo1}. First, we explicitly compute $\Psi'(\alpha) = \phi'(\alpha) + 2(\alpha - s)$, using \cref{e:220327c}:

\begin{equation}
\Psi'(\alpha) = -2\bigg(1 - \frac{\alpha}{\max\{\sqrt{(\alpha - 1)^2 + 4},\alpha\}} \bigg)
\bigg(1-\alpha+\frac{(\alpha - 1)^2 + 4}{\max\{\sqrt{(\alpha - 1)^2 + 4},\alpha\}}\bigg) + 2(\alpha - 1).
\end{equation}

We apply \cref{algo1} to find the optimal $\alpha^* > 0$, satisfying $\Psi'(\alpha^*) = 0$, and thus \cref{e:220512}. 
We do this with the starting points $\alpha = 3$, $\beta = 5$, and the error tolerance $\epsilon = 10^{-6}$. The results are recorded in \cref{table1}. 
The algorithm terminates with $\alpha^* = 1.4597189$; 
thus, we conclude that 
\begin{equation}
P_K(1, 2, 1) \approx (\alpha^* P_C(y/\alpha^*), \alpha^*) \approx (1.1327162, 1.4226203, 1.4597189).
\end{equation}

\begin{table}[ht!]
\centering
\begin{tabular}{c|ccc|ccc}
$n$ & $\alpha$ & $\frac{\alpha + \beta}{2}$ & $\beta$ & $\Psi'(\alpha)$ & $\Psi'\left(\frac{\alpha + \beta}{2}\right)$ & $\Psi'(\beta)$ \rule[-0.9ex]{0pt}{0pt} \\
\hline
$ 1 $ & $ 3.0000000 $ & $ - $ & $ 5.0000000 $ & $ 4.10 \times 10^{0} $ & $ - $ & $ 8.11 \times 10^{0} $ \rule{0pt}{2.6ex} \\
$ 2 $ & $ 1.5000000 $ & $ - $ & $ 3.0000000 $ & $ 1.49 \times 10^{-1} $ & $ - $ & $ 4.10 \times 10^{0} $ \\
$ 3 $ & $ 0.75000000 $ & $ 1.1250000 $ & $ 1.5000000 $ & $ -3.35 \times 10^{0} $ & $ -1.310 \times 10^{0} $ & $ 1.49 \times 10^{-1} $ \\
$ 4 $ & $ 1.1250000 $ & $ 1.3125000 $ & $ 1.5000000 $ & $ -1.310 \times 10^{0} $ & $ -5.79 \times 10^{-1} $ & $ 1.49 \times 10^{-1} $ \\
$ 5 $ & $ 1.3125000 $ & $ 1.4062500 $ & $ 1.5000000 $ & $ -5.79 \times 10^{-1} $ & $ -2.04 \times 10^{-1} $ & $ 1.49 \times 10^{-1} $ \\
$ 6 $ & $ 1.4062500 $ & $ 1.4531250 $ & $ 1.5000000 $ & $ -2.04 \times 10^{-1} $ & $ -2.48 \times 10^{-2} $ & $ 1.49 \times 10^{-1} $ \\
$ 7 $ & $ 1.4531250 $ & $ 1.4765625 $ & $ 1.5000000 $ & $ -2.48 \times 10^{-2} $ & $ 6.29 \times 10^{-2} $ & $ 1.49 \times 10^{-1} $ \\
$ 8 $ & $ 1.4531250 $ & $ 1.4648438 $ & $ 1.4765625 $ & $ -2.48 \times 10^{-2} $ & $ 1.92 \times 10^{-2} $ & $ 6.29 \times 10^{-2} $ \\
$ 9 $ & $ 1.4531250 $ & $ 1.4589844 $ & $ 1.4648438 $ & $ -2.48 \times 10^{-2} $ & $ -2.76 \times 10^{-3} $ & $ 1.92 \times 10^{-2} $ \\
$ 10 $ & $ 1.4589844 $ & $ 1.4619141 $ & $ 1.4648438 $ & $ -2.76 \times 10^{-3} $ & $ 8.23 \times 10^{-3} $ & $ 1.92 \times 10^{-2} $ \\
$ 11 $ & $ 1.4589844 $ & $ 1.4604492 $ & $ 1.4619141 $ & $ -2.76 \times 10^{-3} $ & $ 2.74 \times 10^{-3} $ & $ 8.23 \times 10^{-3} $ \\
$ 12 $ & $ 1.4589844 $ & $ 1.4597168 $ & $ 1.4604492 $ & $ -2.76 \times 10^{-3} $ & $ -1.06 \times 10^{-5} $ & $ 2.74 \times 10^{-3} $ \\
$ 13 $ & $ 1.4597168 $ & $ 1.4600830 $ & $ 1.4604492 $ & $ -1.06 \times 10^{-5} $ & $ 1.36 \times 10^{-3} $ & $ 2.74 \times 10^{-3} $ \\
$ 14 $ & $ 1.4597168 $ & $ 1.4598999 $ & $ 1.4600830 $ & $ -1.06 \times 10^{-5} $ & $ 6.77 \times 10^{-4} $ & $ 1.36 \times 10^{-3} $ \\
$ 15 $ & $ 1.4597168 $ & $ 1.4598083 $ & $ 1.4598999 $ & $ -1.06 \times 10^{-5} $ & $ 3.33 \times 10^{-4} $ & $ 6.77 \times 10^{-4} $ \\
$ 16 $ & $ 1.4597168 $ & $ 1.4597626 $ & $ 1.4598083 $ & $ -1.06 \times 10^{-5} $ & $ 1.61 \times 10^{-4} $ & $ 3.33 \times 10^{-4} $ \\
$ 17 $ & $ 1.4597168 $ & $ 1.4597397 $ & $ 1.4597626 $ & $ -1.06 \times 10^{-5} $ & $ 7.53 \times 10^{-5} $ & $ 1.61 \times 10^{-4} $ \\
$ 18 $ & $ 1.4597168 $ & $ 1.4597282 $ & $ 1.4597397 $ & $ -1.06 \times 10^{-5} $ & $ 3.24 \times 10^{-5} $ & $ 7.53 \times 10^{-5} $ \\
$ 19 $ & $ 1.4597168 $ & $ 1.4597225 $ & $ 1.4597282 $ & $ -1.06 \times 10^{-5} $ & $ 1.09 \times 10^{-5} $ & $ 3.24 \times 10^{-5} $ \\
$ 20 $ & $ 1.4597168 $ & $ 1.4597197 $ & $ 1.4597225 $ & $ -1.06 \times 10^{-5} $ & $ 1.65 \times 10^{-7} $ & $ 1.09 \times 10^{-5} $ \\
$ 21 $ & $ 1.4597168 $ & $ 1.4597182 $ & $ 1.4597197 $ & $ -1.06 \times 10^{-5} $ & $ -5.20 \times 10^{-6} $ & $ 1.65 \times 10^{-7} $ \\
$ 22 $ & $ 1.4597182 $ & $ 1.4597189 $ & $ 1.4597197 $ & $ -5.20 \times 10^{-6} $ & $ -2.52 \times 10^{-6} $ & $ 1.65 \times 10^{-7} $ \\
$ 23 $ & $ \underline{1.4597189} $ & $ 1.4597193 $ & $ 1.4597197 $ & $ -2.52 \times 10^{-6} $ & $ -1.18 \times 10^{-6} $ & $ 1.65 \times 10^{-7} $
\end{tabular}
\caption{Behavior of \cref{algo1} for \cref{ex:bpzneq0}}
\label{table1}
\end{table}
\end{example}

\begin{figure}
\centering
\includegraphics[width=0.75\textwidth]{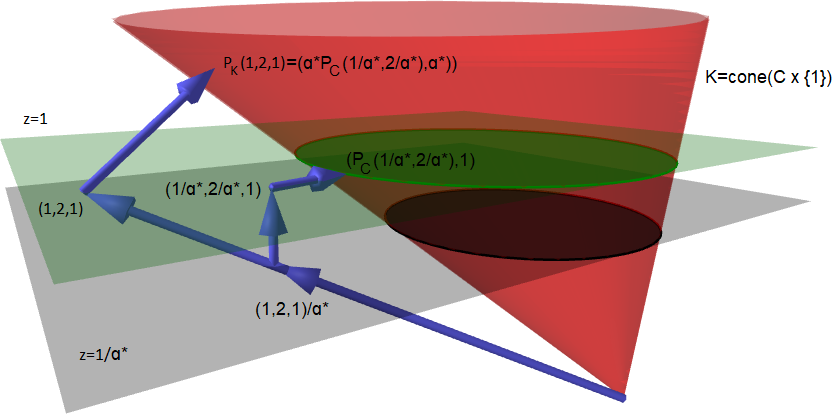}
\caption{Illustration of \cref{ex:bpzneq0}}
\label{fig2a}
\end{figure}

\subsection{Abstract ball-pen set}

In this subsection, we assume that $B$ is the closed unit ball centred at $0$,  
$R$ is a nonempty closed convex cone, and 
\begin{equation}
C = B+R.
\end{equation}
Then $\rec C = R$, 
\begin{equation}
P_C(x) = 
\begin{cases}
x, &\text{if $d_R(x)\leq 1$;}\\
\displaystyle P_Rx+\frac{x-P_Rx}{d_R(x)}, &\text{if $d_R(x)>1$,}
\end{cases}
\end{equation}
and hence 
\begin{equation}
d_C(x) = \max\{0,d_R(x)-1\}.
\end{equation}
Because $R$ is a cone, we have $d_R(y/\alpha)=\tfrac{1}{\alpha}d_R(y)$ and thus
\begin{equation}
\alpha^2d^2_C(y/\alpha) = {\max}^2\big\{0,d_R(y)-\alpha \big\}. 
\end{equation}
Hence \cref{e:Psi} turns into
\begin{equation}
\Psi(\alpha) = 
\begin{cases}
\pinf, &\text{if $\alpha<0$;}\\
d^2_{R}(y)+s^2, &\text{if $\alpha=0$;}\\
{\max}^2\big\{0,d_R(y)-\alpha \big\}+(\alpha-s)^2, &\text{if $\alpha>0$.}
\end{cases}
\end{equation}
Now if $\alpha>0$, then $\Psi'(\alpha)= -2\max\{0,d_R(y)-\alpha\} + 2(\alpha-s)$ and thus 
\begin{equation}
(\forall \alpha \geq 0)\quad 
\Psi'_+(\alpha) = 
2\min\{\alpha-s,2\alpha-d_R(y)-s\}. 
\end{equation}
Hence $\Psi'_+(0)=2\min\{-s,-d_R(y)-s\} = -2(d_R(y)+s)$.
If $d_R(y)\leq -s$, then $\Psi'_+(0)\geq 0$; 
consequently, $\alpha^*=0$ and 
\begin{equation}
P_K(y,s) = (P_Ry,0). 
\end{equation}
Otherwise, $d_R(y)>-s$ $\Leftrightarrow$ $\alpha^*>0$
$\Leftrightarrow$ 
$2\min\{\alpha^*-s,2\alpha^*-d_R(y)-s\}=0$
$\Leftrightarrow$ 
$\min\{\alpha^*,2\alpha^*-d_R(y)\}=s$
$\Leftrightarrow$ 
\begin{equation}
\alpha^* = 
\begin{cases}
s, &\text{if $d_R(y)\leq s$;}\\
\displaystyle \frac{s+d_R(y)}{2}, &\text{if $d_R(y)>s$.}
\end{cases}
\end{equation}
To sum up, 
\begin{equation}
\alpha^* = 
\begin{cases}
0, &\text{if $d_R(y)\leq -s$;}\\
s, &\text{if $-s<d_R(y)\leq s$;}\\
\displaystyle \frac{s+d_R(y)}{2}, &\text{if $|s|<d_R(y)$}
\end{cases}
\end{equation}
and thus 
\begin{equation}
P_K(y,s) = 
\begin{cases}
(P_R(y),0), &\text{if $d_R(y)\leq -s$;}\\
\big(sP_C(y/s),s\big), &\text{if $-s<d_R(y)\leq s$;}\\
\Big(\frac{s+d_R(y)}{2}P_C\big(2y/(s+d_R(y)\big), \frac{s+d_R(y)}{2} \Big), &\text{if $|s|<d_R(y)$.}
\end{cases}
\end{equation}

\begin{example}[\bf classical ball-pen set]
\label{eg:classicalballpen}
The classical ball-pen example arises when 
$X=\RR^2$ and $R = \RR_+(0,1)$. 
Then $P_Rx = \max\{0,x_2\}(0,1)=(0,\max\{0,x_2\})$ and 
$d_R^2(x)= \|x\|^2 - \max^2\{0,x_2\}=x_1^2+\min^2\{0,x_2\}$. 
Setting 
\begin{equation}
\delta(y) := d_R(y) = \tsqrt{y_1^2+\min^2\{0,y_2\}},
\end{equation}
we deduce that 
\begin{equation}
P_K(y,s) = P_K(y_1,y_2,s) = 
\begin{cases}
\big(0,\max\{0,y_2\},0\big), &\text{if $\delta(y)\leq -s$;}\\
\big(sP_C(y/s),s\big), &\text{if $-s<\delta(y)\leq s$;}\\
\Big(\frac{s+\delta(y)}{2}P_C\big(2y/(s+\delta(y)\big), \frac{s+\delta(y)}{2} \Big), &\text{if $|s|<\delta(y)$.}
\end{cases}
\end{equation}
Note that $(y_1,y_2,s)\in K^\ominus$
$\Leftrightarrow$
$P_K(y_1,y_2,s)=(0,0,0)$
$\Leftrightarrow$
$y_2 \leq 0$ and $\sqrt{y_1^2+y_2^2}+s \leq 0$; 
consequently,
\begin{equation}
K^\ominus = \menge{(y_1,y_2,y_3)\in\RR\times\RR_-\times\RR_-}{\sqrt{y_1^2+y_2^2}+y_3\leq 0}. 
\end{equation}
\end{example}

\section*{Acknowledgments}
The authors thank Professors Simeon Reich and Tadeusz Kuczumow for 
comments concerning linearly bounded sets and providing us with a copy of 
\cite{GK1978}; 
Professor Daniel Klain for comments on polar sets and pointing us to 
\cite{Schneider} as a reference; 
and Professor Levent Tun\c{c}el for comments on the homogenization cone and 
providing us with \cite{RoshchinaTuncel}.
HHB is supported by the Natural Sciences and
Engineering Research Council of Canada.


\appendix

\section{Appendix}

\label{app:A}

\begin{example}
\label{app:1}
Suppose that $X=\RR^n$ and 
\begin{equation}
C = \menge{(x_1,\ldots,x_n)\in \RR^n}{x_1+\cdots+x_n\leq 1,\;\text{each } x_i\geq 0}.
\end{equation}
Then
\begin{equation}
\label{e:220311a}
C^\odot = \menge{(y_1,\ldots,y_n)\in \RR^n}{\text{each } y_i\leq 1}.
\end{equation}
\end{example}
\begin{proof}
Let $x\in X$ and $y\in X$. 
Denote the right side of \cref{e:220311a} by $D$.
Suppose first that $x\in C$ and $y\in D$.
Then 
\begin{equation}
\scal{x}{y} = \sum_{i=1}^n x_iy_i 
\leq \sum_{i=1}^n x_i  \leq 1. 
\end{equation}
Hence $D\subseteq C^\odot$.

Now assume that $y\in X\smallsetminus D$.
Then there exists an index such that $y_i>1$. 
Suppose that $x_i=1$ and $x_j=0$ when $j\neq i$. 
Then $x\in C$ and 
$\scal{x}{y} = y_i>1$ and so $y\notin C^\odot$. 
Hence $X\smallsetminus D \subseteq X\smallsetminus C^\odot$. 

Altogether, we have verified \cref{e:220311a}. 
\end{proof}

\begin{example}
\label{app:3}
Suppose that $X=\RR^2$ and 
\begin{equation}
C = \Menge{(x_1,x_2)\in \RR^2}{x_1\leq 1 - \sqrt{1+x_2^2}}.
\end{equation}
Then
\begin{equation}
\label{e:220315a}
\sigma_C(y_1,y_2)
= \begin{cases}
+\infty, &\text{if $y_1<|y_2|$;}\\
y_1-\sqrt{y_1^2-y_2^2}, &\text{if $|y_2|\leq y_1$},
\end{cases}
\end{equation}
and 
\begin{subequations}
\begin{align}
\label{e:220311c}
C^\odot &= 
\menge{(y_1,y_2)\in\RR^2}{|y_2|\leq y_1 \;\text{and}\; 1+y_2^2\leq 2y_1}\\
&=
\conv \Big(\{0\}\cup \Menge{(y_1,y_2)\in \RR^2}{y_1\geq (1+y_2^2)/2}\Big).
\end{align}
\end{subequations}
\end{example}
\begin{proof}
We shall compute $\sigma_C$.
Let us write, for simplicity, $(x_1,x_2)=(x,y)$. 
Then $(x,y)\in C$ $\Leftrightarrow$ $x\leq 1- \sqrt{1+y^2}\leq 0$, so for sure $x\leq 0$.
Next, $\sigma_C$ is fully determined by the boundary of $C$, which is given by 
\begin{equation}
x = 1-\tsqrt{1+y^2}.
\end{equation}
Let $(u,v)\in\RR^2$. 
Then 
\begin{equation}
\sigma_C(u,v) = \sup_{y\in\RR}\Big(u\big(1-\tsqrt{1+y^2}\big) +vy\Big). 
\end{equation}
Note that $\sigma_C(u,v)=\sigma_C(u,-v)$, so we may and do assume that $v\geq 0$. 
Moreover, if $u<0$, then clearly $\sigma_C(u,v)=+\infty$.
And if $u=0$, then $\sigma_C(u,v)=\sigma_C(0,v) = \sup_{y\in\RR}vy = \iota_{\{0\}}(v)$
consistent with our announced formula. 
So we assume from now on that also $u> 0$. 

\emph{Case~1:} $0<u<v$.\\
Because 
\begin{subequations}
\begin{align}
u\big(1-\tsqrt{1+y^2}\big) +vy
&= u+(v-u)\tsqrt{1+y^2} - \displaystyle\frac{v}{y+\tsqrt{1+y^2}}\\
&\to u+\infty - 0 
\end{align}
\end{subequations}
as $y\to+\infty$, we see that $\sigma_C(u,v)=+\infty$ which is consistent 
with our announced formula. 

\emph{Case~2:} $0<u=v$.\\
Because 
\begin{subequations}
\begin{align}
v\big(1-\tsqrt{1+y^2}\big) +vy
&= v - \displaystyle\frac{v}{y+\tsqrt{1+y^2}}\\
&\to v
\end{align}
\end{subequations}
as $y\to+\infty$, we see that $\sigma_C(u,v)=v$ which is consistent 
with our announced formula. 

\emph{Case~3:} $0<v<u$.\\
Here we can use Calculus!
Note that $0 = \tfrac{d}{dy} u(1-\tsqrt{1+y^2}) +vy = v-uy/\tsqrt{1+y^2}$ 
$\Leftrightarrow$ 
\begin{equation}
\frac{y}{\tsqrt{1+y^2}} = \frac{v}{u} =: q < 1.
\end{equation}
This equation can be solved for $y$; indeed, the solution is 
\begin{equation}
y := \frac{q}{\tsqrt{1-q^2}} = \frac{v}{\tsqrt{u^2-v^2}}. 
\end{equation}
One checks that $\sqrt{1+y^2} = u/\sqrt{u^2-v^2}$ and hence
\begin{align}
\sigma_C(u,v)
&=
u\big(1-\tsqrt{1+y^2}\big)+vy \\
&= u-\frac{u^2-v^2}{\sqrt{u^2-v^2}}\\
&= u -\sqrt{u^2-v^2}.
\end{align}

The three cases considered verify \cref{e:220315a}.
Now suppose that $|v|\leq u$. 
Then $(u,v)\in C^\odot$
$\Leftrightarrow$
$\sigma_C(u,v)\leq 1$
$\Leftrightarrow$
$u-\tsqrt{u^2-v^2}\leq 1$
$\Leftrightarrow$
$u-1\leq \sqrt{u^2-v^2}$. 

If $u\leq 1$, then $u-1\leq 0 \leq \sqrt{u^2-v^2}$ and thus $(u,v)\in C^\odot$. 
And if 
$u\geq 1$, then 
$u-1\leq \sqrt{u^2-v^2}$
$\Leftrightarrow$
$(u-1)^2 \leq u^2-v^2$
$\Leftrightarrow$
$1+v^2\leq 2u$.
\end{proof}

\end{document}